\def\doi#1{   {\href{http://dx.doi.org/#1}
   {{\mdseries\ttfamily DOI}}}}
\def\e{\varepsilon}
\def\<{\langle}             \def\>{\rangle}
\newcommand{\beeq}{\begin{equation}}\newcommand{\eneq}{\end{equation}}
    \newcommand{\be}{\beta}
\newcommand{\de}{\delta}    
\newcommand{\ep}{\epsilon}\newcommand{\vep}{\varepsilon}
    \newcommand{\la}{\lambda}
\newcommand{\om}{\omega}    
\newcommand{\ga}{\gamma}    
\newcommand{\R}{\mathbb{R}}
\newcommand{\CO}{\mathcal{O}}
\newcommand{\Sp}{\mathbb{S}}
\newenvironment{prf}{\noindent {\bf Proof.} }{\endprf\par}
\def \endprf{\hfill  {\vrule height6pt width6pt depth0pt}\medskip}
\newcommand{\pt}{\partial_t}\newcommand{\pa}{\partial}
\newcommand{\f}{\frac}
\newcommand{\les}{{\lesssim}}\newcommand{\ges}{{\gtrsim}}%{{\succsim}}
\newcommand{\supp}{\,\mathop{\!\mathrm{supp}}}
\def\({\left(}                 \def\){\right)}
\numberwithin{equation}{section}
\newtheorem{thm}{Theorem}[section]
 \newtheorem{cor}[thm]{Corollary}
 \newtheorem{lem}[thm]{Lemma}
 \newtheorem{defn}[thm]{Definition}
 \newtheorem{rem}[thm]{Remark}
\title[Lifespan for semilinear wave equations with damping and potential]
{Lifespan estimates for semilinear wave equations with space dependent damping and potential}
\author{Ning-An Lai}
\address{Institute of Nonlinear Analysis and Department of Mathematics\\
		Lishui University\\Lishui 323000, P. R. China}
		\email{ninganlai@lsu.edu.cn}
\author{Mengyun Liu$^{*}$}\thanks{* Corresponding author}
\address{Department of Mathematics\\Zhejiang Sci-Tech University\\Hangzhou 310018, P. R. China }
\email{mengyunliu@zstu.edu.cn}
\author{Ziheng Tu}
\address{School of Data Science, Zhejiang University of Finance and Economics, 310018 Hangzhou, P. R. China }\email{tuziheng@zufe.edu.cn}
\author{Chengbo Wang}
\address{School of Mathematical Sciences\\ Zhejiang University\\Hangzhou 310027, P. R. China}\email{wangcbo@zju.edu.cn }
\keywords{semilinear wave equations, damping, potential, lifespan estimate, test function}
\subjclass[2010]{35L05, 35L71, 35B33, 35B44,  35B40, 35B30}
\date{\today}
\begin{document}
% \begin{CJK*}{UTF8}{gbsn}
\maketitle

\begin{abstract}
In this work, we investigate the influence of
general damping and potential terms on the
blow-up and lifespan estimates for
energy solutions to power-type semilinear wave equations. The
space-dependent damping and potential functions are assumed to
be critical or short range, spherically symmetric perturbation.
The blow up results and the upper bound of lifespan estimates are obtained by the so-called test function method. The key ingredient is to construct
special positive solutions to the linear dual problem
with the desired asymptotic behavior, which is reduced, in turn, to constructing solutions to certain elliptic ``eigenvalue" problems.
\end{abstract}

%\tableofcontents

\section{Introduction}

The purpose of this paper is to investigate the influence of
general damping and potential terms on the
blow-up and lifespan estimates for
energy solutions to power-type semilinear wave equations. The
space-dependent damping and potential functions are assumed to
be critical or short range, spherically symmetric perturbation.

More precisely,
let $n\ge 2$, $p>1$, $D, V\in C(\R^n\backslash\{0\})$,
 we consider the following Cauchy problem of semilinear wave equations, with small data
\begin{equation}
\label{main}
\left\{
\begin{aligned}
& u_{tt} - \Delta u + D(x)u_t+V(x)u = |u|^p, (t, x) \in (0, T)\times\R^n, \\
& u(x,0)=\e f(x), \quad u_t(x,0)=\e g(x)
\end{aligned}
\right.
\end{equation}
Here
$f, g\in C^\infty_c(\R^n)$,  and
the small parameter $\vep>0$ measures the size of the data. As usual, to show blow up, we assume  both $f$ and $g$ are nontrivial, nonnegative and supported in $B_R:= \{x\in\R^n: r\le R\}$ for some $R>0$, where $|x|=r$. In view of scaling, we see that
$D$ and $V$ are critical or short range, if $D=\CO (|x|^{-1})$, $V=\CO(|x|^{-2})$, near spatial infinity.

There have been many evidences that the critical power, $p_c$, for $p$ so that the problem admits global solutions, seems to be related with two kinds of the dimensional shift due to the critical damping and potential coefficients near the spatial infinity. Here we call $p_c$ to be a critical power, if there exists $\de>0$ such that there are certain class of data $(f,g)$ so that
we have blow up for any $\vep>0$ and $p\in (p_c-\de, p_c)$, while there are
for any $p\in (p_c, p_c+\de)$, we have small data global existence for $\vep\in (0, \vep_0)$.

Heuristically, in the sample case $D=d_\infty /r$ and $V=0$, the perturbed linear wave operator for radial solutions is of the following form
$$u_{tt} - \Delta u + D(x)u_t=(\pt^2-\pa_r^2-\frac{n+d_\infty-1}{r}\pa_r)u+\frac{d_\infty}{r}(\pt+\pa_r)u\ .$$
In view of the dispersive nature of the solutions for wave equations, $(\pt+\pa_r)u$ tend to be negligible (good derivative) and thus it
behaves like $n+d_\infty$ dimensional wave equations, which suggests the role of $n+d_\infty$. On the other hand, when we consider the elliptic operator $-\Delta +V$ with $V=v_\infty (1+r)^{-2}$, the asymptotic behavior of the radial solutions seems to be determined by the operator
$\pa_r^2+\frac{n-1}{r}\pa_r-v_\infty r^{-2}$, which is a linear ODE operator of the Euler type and has eigenvalues
\beeq
\rho(v_\infty):=\sqrt{\big(\frac{n-2}{2}\big)^2+v_\infty}-\frac{n-2}{2}, -(n-2)-\rho(v_\infty).
\eneq This suggests the role of $\rho(v_\infty)$.
In conclusion, the heuristic analysis strongly suggests that, under some reasonable assumptions on $D$ and $V$,  we have a critical power given by \beeq p_c=\max(p_S(n+d_\infty), p_G(n+\rho(v_\infty)))\ ,\eneq
where, for $m\in\R$,
\beeq
\ p_G(m)=
\left\{\begin{array}{ll}
%\frac{n+1+\sqrt{(n+1)^2+8(n-1)}}{2(n-1)}
1+\frac{2}{m-1}
& m>1,\\
\infty & m\le 1 ,
\end{array}\right.
d_\infty=\lim_{r\to \infty}rD(r),\
v_\infty=\lim_{r\to \infty}r^2 V(r),
\ \eneq
and
 $p_S(m)$ is related to the Strauss exponent \cite{Strauss}, which is defined to be
\beeq p_S(m)=\left\{\begin{array}{ll}
%\frac{n+1+\sqrt{(n+1)^2+8(n-1)}}{2(n-1)}
\frac{m+1+\sqrt{m^2+10 m-7}}{2(m-1)}
& m>1\\
\infty & m\le 1\ .
\end{array}\right.\eneq
Here, $p_G$ is related to the
Glassey exponent $p_G(n)$ for \[
u_{tt}-\Delta u=|u_t|^p\ , x\in\R^n\ ,
\] or the Fujita exponent $p_F(n)=p_G(n+1)$ for heat or damped wave equations
\[
u_{tt}+u_t-\Delta u=|u|^p, u_t-\Delta u=|u|^p\ ,
\] see, e.g., \cite{LW20, LZ, Fujita66}.

Despite of some partial results, particularly on the blow up part, the problem of determining the critical power (as well as giving the sharp lifespan estimates) for the problem \eqref{main} is still largely open in general.

In this paper, we would like to
show that, there exists a large class of the damping and potential functions of critical/long range, this conjecture is true, at least in the blow up part. At the same time, we are able to give upper bounds for the lifespan, which are expected to be sharp for the range $p\in (p_c-\de, p_c)$.

Before proceeding, we give the definition of energy solutions.
\begin{defn}\label{def1}
We say that $u$ is an energy solution of \eqref{main} on $[0,T]$
if
\begin{equation}\nonumber
u\in C([0,T],H^1(\R^n))\cap C^1([0,T],L^2(\R^n))\cap L^p(
[0,T]\times \R^n)
\end{equation}
satisfies
$\supp u(t,\cdot)\subset B_{t+R}$ and
\begin{equation}\label{weaksol}
\begin{aligned}
%\vep \int_{\R^n}g(x)\Psi(0, x)dx+\vep\int_{\R^n}D(x)f(x)\Psi(0, x)dx\\
&\int_0^T\int_{\R^n}|u|^p\Psi(t, x)dxdt
%+\vep\int_{\R^n}(g(x)+D(x)f(x))\Psi(0, x)dx\\
-\left.\int_{\R^n}(u_t(t,x)+D(x)u(t,x))\Psi(t, x)dx\right|_{t=0}^T\\
=&-\int_0^T\int_{\R^n}u_t(t, x)\Psi_t(t, x)dxdt+\int_0^T\int_{\R^n}\nabla u(t, x)\cdot\nabla\Psi(t, x)dxdt\\
&-\int_0^T\int_{\R^n}D(x)u(t, x)\Psi_t(t, x)dxdt+\int_0^T\int_{\R^n}V(x)u(t, x)\Psi(t, x)dxdt
\end{aligned}
\end{equation}
for any $\Psi(t, x)\in (C^0_t H_{loc}^1\cap C^1_t L_{loc}^2)([0, T]\times \R^n)$. When $n=2$, we additionally suppose
$\Psi, V \Psi, D\Psi_t\in L^{1/(1-\de_0)}_{loc}([0, T]\times \R^n)$, and
$D(x)\Psi(0, x)\in L^{ 1/(1-\de_0)}_{loc}$, for some $\de_0>0$, which ensures the integrals are well-defined. The supremum of all such time of existence,  $T$, is called to be the
lifespan to the problem \eqref{main}, denoted by $T_\vep$.
\end{defn}

%Let us divide the general situations into certain
Before presenting our main results, let us first give a brief review of the history, in a broader context. \\

{\bf (\uppercase\expandafter{\romannumeral1}) Scattering damping $D=\mathcal{O}((1+|x|)^{-\be})$, $V=0$}\\

When there are no damping and potential, this problem is related to the so-called Strauss conjecture, for which the critical power is given by $p_S(n)$, which is the positive root of the quadratic equation
\begin{equation}
\label{quadratic}
\gamma(p,n):=2+(n+1)p-(n-1)p^2=0\ ,
\end{equation}
when $n>1$.
See \cite{John79, Gla81-g,Zhou95,LS96,GLS97,LaZ1}
for global results and
\cite{John79,Gla81-b,Sch85,Sid84,YZ06, Zhou07} for blow up results
(including the critical case $p=p_S(n)$).

When there is no potential term, this problem has been widely investigated with the typical damping $D=\mu (1+|x|)^{-\be}$
\begin{equation}
\label{dampedsemi}
\left\{
\begin{aligned}
& u_{tt} - \Delta u +\mu (1+|x|)^{-\be} u_t=|u|^p, (t, x) \in [0, T)\times\R^n, \\
& u(x,0)= f(x), \quad u_t(x,0)= g(x).
\end{aligned}
\right.
\end{equation}
The asymptotic behavior of the solution to the corresponding linear damped equations has been comprehensively studied, in view of the works of \cite{IK2, ITY13, Moc76, RTY1, RTY10, RTY3, TY09, Y.Wak14}, we have the following results
\begin{center}
\begin{tabular}{|c|c|c|}
\hline
$\beta\in (-\infty, 1)$ & effective &
\begin{tabular}{c}
solution behaves like\\
that of heat equation
\end{tabular}
\\
\hline
$\beta=1$ &
\begin{tabular}{c}
scaling invariant\\
weak damping
\end{tabular} &
\begin{tabular}{c}
the asymptotic behavior\\depends on $\mu$
\end{tabular}
\\
 \hline
$\beta\in(1,\infty)$ & scattering &
\begin{tabular}{c}
solution behaves like
that\\ of wave equation without damping
\end{tabular}\\
\hline
\end{tabular}
\end{center}
Turning to the nonlinear problem \eqref{dampedsemi}, the critical power depends on the value of $\be$ and $\mu$. For $\be\in [0, 1)$, Ikehata, Todorova and Yordanov \cite{ITY09} showed that the critical power of \eqref{dampedsemi}  is the shifted Fujita exponent $p_{c}(n)=p_G(n-\be+1)=1+\frac{2}{n-\be}$. Nishihara \cite{Nishi10} studied the same damping case but with absorbed semilinear term $|u|^{p-1}u$ and proved the diffusion phenomena. For the blow-up solution, Ikeda-Sobajima \cite{IS19} gave the sharp upper bound of lifespan for the effective case $\be<1$ via the test function method, which was developed from Mitidieri-Pokhozhaev \cite{MP01}. Recently, Nishihara, Sobajima and Wakasugi \cite{N2} verified that the critical power is still $1+\frac{2}{n-\be}$ when $\be<0$.
For the critical case $\be=1$ with $\mu\ge n$,
 Li \cite{LiXin} obtained the blow-up result when $p\le p_G(n)$.

Turning to the scattering case $\be>1$, as we have discussed, it is natural to expect that the critical power is exactly the same as that of the Strauss conjecture, i.e., $p_c=p_S(n)$, see also the introduction in page $2$ in Ikehata-Todorova-Yordanov \cite{ITY} and conjecture (iii) in page 4 in Nishihara-Sobajima-Wakasugi \cite{N2}. This conjecture has been verified at least for the blow-up part when $\be>2$ and $n\ge 3$ in Lai-Tu \cite{LT20}, based on a key observation that the test function $e^{-t}\phi_1(x)$ satisfies the dual of the corresponding linear equation, where $\phi_1(x)=\int_{\Sp^{n-1}} e^{x\cdot\om}d\om$ is the one from Yordanov-Zhang \cite{YZ05}.
On the other hand, when $n=3, 4$, Metcalfe-Wang \cite{MW} obtained the global existence when $p>p_S(n)$ and $\beta>1$ with sufficiently small $|\mu|$.

Our first main result verifies the blow up part of the conjecture for the scattering damping, which, together with \cite{MW}, shows that the critical power is $p_S(n)$, at least for small scattering damping function, when
$n=3, 4$. Moreover, we improve the lifespan estimates in  \cite{LT20} for $p\le n/(n-1)$.
\begin{thm}
\label{thmStrauss}
Let $n\geq 2$. Consider the Cauchy problem \eqref{main} with $V(x)=0$. Suppose $D(x)\in C(\R^n)\cap C^{\de}(B_\de)$ for some $\de>0$ and $0\leq D(x)\leq \mu (1+|x|)^{-\be}$ with $\be>1$ and $\mu\geq 0$.
Then
for any $1<p\le p_S(n)$,
any energy solutions for  nontrivial, nonnegative, compactly supported data will blow up  in  finite time. In addition, there exist positive constants $C,\e_0$
%=\e_0(f,g,n,p,\mu, \be)>0$
such that the lifespan $T_\varepsilon$ satisfies
\begin{equation}
\label{lifespan1b}
T_\varepsilon\le
 \left\{
 \begin{array}{ll}
 C\e^{-\frac{2p(p-1)}{\gamma_0(p)}} & \mbox{for}\ 1<p\le \frac{n}{n-1},\\
 C\varepsilon^{-\frac{2p(p-1)}{\gamma(p, n)}} & \mbox{for}\ \frac{n}{n-1}\le p<p_S(n),\\
 \exp\left(C\e^{-p(p-1)}\right)& \mbox{for}\ p=p_S(n)\\
 \end{array}
 \right.
\end{equation}
for any $0<\e\le\e_0$, where 
\begin{equation}\label{gamma0}
\gamma_0(p)= -(n-1)p(p-1)+2n(p-1)+2.
\end{equation}
In addition,
the results for $p<p_S(n)$ apply also for general (short range) damping function ($D(x)\in L^n\cap L^\infty(\R^n)$ without the sign condition).
Here and in what follows, $C$ denotes a positive constant independent of $\vep$ and may change from line to line.
\end{thm}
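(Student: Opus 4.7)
My plan is to adopt the test function method, following Yordanov--Zhang \cite{YZ05} and the scattering-damping extension of Lai--Tu \cite{LT20}, but with the crucial refinement that the multiplier is an \emph{exact} positive solution of the adjoint linear equation rather than a perturbative approximation.

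\textbf{Step 1 (elliptic eigenfunction).} The key preliminary step is to construct a strictly positive $\phi\in C^2(\R^n)$ satisfying
\[
\Delta\phi(x) \,=\, (1+D(x))\phi(x)\qquad\text{in }\R^n,
\]
with the sharp asymptotic $\phi(x)\sim c|x|^{-(n-1)/2}e^{|x|}$, matching Yordanov--Zhang's function $\phi_1(x) := \int_{\Sp^{n-1}}e^{x\cdot\omega}d\omega$ (which solves $\Delta\phi_1 = \phi_1$). Writing $\phi = \phi_1 + \psi$, the equation becomes
\[
(\Delta - 1)\psi \,=\, D\phi_1 + D\psi,
\]
and since $D = \CO((1+|x|)^{-\beta})$ with $\beta > 1$, the operator $(\Delta-1)^{-1}D$ is a contraction on the weighted space $\{\psi : \|\langle x\rangle^{(n-1)/2}e^{-|x|}\psi\|_{L^\infty} < \infty\}$, using the Yukawa Green's function bound $G\lesssim |z|^{-(n-1)/2}e^{-|z|}$. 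Neumann iteration produces $\psi$ with $\psi/\phi_1 \to 0$ at infinity. Positivity of $\phi$ follows from the maximum principle when $D \ge 0$; in the sign-indefinite case, from smallness of $\psi$ relative to $\phi_1$. With $\Psi(t,x) := e^{-t}\phi(x)$, one then has $\Psi_{tt} - \Delta\Psi - D\Psi_t = 0$.

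\textbf{Step 2 (master identity).} Substituting $\Psi$ into \eqref{weaksol} and integrating by parts to move derivatives onto $\Psi$, one arrives at
\[
e^{-T}\!\int\!\!\big(u(T)+u_t(T)+Du(T)\big)\phi\,dx \;=\; \varepsilon\!\!\int\!\!(f+g+Df)\phi\,dx \;+\; \int_0^T\!\!\int |u|^p e^{-t}\phi\,dx\,dt.
\]
Under nontriviality/nonnegativity the data contribution is bounded below by $c\varepsilon$, so the LHS is nondecreasing and at least $c\varepsilon$. Pairing this with Hölder's inequality on $B_{t+R}$ against the weight $e^{-t}\phi$, and using $\phi(x)\lesssim \langle x\rangle^{-(n-1)/2}e^{|x|}$, yields a Kato--Yordanov--Zhang integral inequality for the first moment $\mathcal F(t) := \int u(t,x)\phi(x)\,dx$.

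\textbf{Step 3 (iteration and lifespan).} For $n/(n-1)\le p < p_S(n)$, the Sideris--Yordanov--Zhang iteration on the Kato inequality (with the $(t+R)^{(n-1)(p-1)/2}$ weight coming from the exponential factor in $\phi$) produces blow-up with the Strauss exponent $\gamma(p,n) = 2 + (n+1)p - (n-1)p^2$; at $p = p_S(n)$, $\gamma = 0$ and a slice/logarithmic iteration yields $\exp(C\varepsilon^{-p(p-1)})$. In the low range $1 < p \le n/(n-1)$ the iteration saturates, and a direct Mitidieri--Pokhozhaev-type Hölder estimate applied to the master identity, now dominated by the polynomial volume factor $|B_{t+R}|\sim (t+R)^n$ rather than the exponential weight, yields the exponent $\gamma_0(p) = -(n-1)p(p-1) + 2n(p-1) + 2$. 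The sign-indefinite short-range case $D\in L^n\cap L^\infty$ for $p < p_S(n)$ is automatic once Step 1 has been completed without the sign hypothesis.

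\textbf{Main obstacle.} The main difficulty lies entirely in Step 1: constructing $\phi$ that is simultaneously globally positive and matches $\phi_1$ at infinity. Without the sign assumption on $D$, the fixed-point scheme must be executed in a norm strong enough to preserve the $|x|^{-(n-1)/2}e^{|x|}$ asymptotic while tolerating possible singularities of $D$ near the origin; the low-dimensional case $n=2$ is especially delicate because the Green's function of $\Delta - 1$ has weaker spatial decay and the marginal integrability conditions in Definition~\ref{def1} must be checked carefully. Once $\phi$ is secured, Steps 2 and 3 are a systematic but nontrivial extension of the pure-wave Yordanov--Zhang argument.
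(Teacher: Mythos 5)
Your overall framework (test functions that are exact solutions of the dual linear equation, combined with H\"older estimates) matches the paper's strategy, but there are three concrete gaps in the execution that would prevent the proposal from producing Theorem \ref{thmStrauss} as stated.

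First, the construction in Step~1 is tied to $\lambda=1$ and a Neumann series for $(\Delta-1)^{-1}D$. This requires $D$ to be small in the relevant weighted norm, but the theorem allows $\mu$ arbitrary, so the contraction argument breaks down whenever $\|D\|_{L^\infty}$ is large. The paper avoids this by building $\phi_{\lambda_0}$ directly from a Dirichlet problem in a small ball followed by the radial ODE (Lemma \ref{lem2}), choosing $\lambda_0\ge\|D\|_{L^\infty}$ so that $\lambda_0^2+\lambda_0D\ge0$; the maximum principle and Lemma \ref{thm-ode} then give positivity and the asymptotics without any smallness hypothesis. In particular the paper's $\lambda_0$ is not a fixed constant, and for the sign-indefinite remark it must be taken large.

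Second, the claim in Step~3 that for $1<p\le n/(n-1)$ a ``direct Mitidieri--Pokhozhaev-type H\"older estimate \dots dominated by the polynomial volume factor'' yields $\gamma_0(p)$ is not correct. Using the constant test function $\phi_0=1$ alone (which is what the polynomial-volume argument amounts to) gives only $\vep\lesssim F_0\lesssim T^{1-p'}$, hence $T_\vep\lesssim\vep^{-(p-1)}$, which is weaker than $\vep^{-2p(p-1)/\gamma_0(p)}$ for $p<2$ since $\gamma_0(p)-2p=(p-1)(n-1)(2-p)>0$. The improved exponent $\gamma_0$ comes from combining two test functions: the upper bound \eqref{T12step2} from $\phi_0=1$ together with the lower bound \eqref{T12step4} from $e^{-\lambda_0 t}\phi_{\lambda_0}$. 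The same two-ingredient combination also produces the middle range exponent $\gamma(p,n)$; you do not actually need a Kato--Yordanov--Zhang ODE iteration here, which is a pleasant simplification of the paper's method that your plan does not explicitly notice.

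Third, and most seriously, the critical case $p=p_S(n)$ cannot be closed with a single eigenfunction. The paper devotes Lemma \ref{elp} to constructing a one-parameter family $\phi_\lambda$ solving $\Delta\phi_\lambda-\lambda D\phi_\lambda=\lambda^2\phi_\lambda$ with estimates uniform in $\lambda\in(0,1]$, and then forms $b_q(t,x)=\int_0^1 e^{-\lambda t}\phi_\lambda(x)\lambda^{q-1}d\lambda$, whose asymptotics \eqref{bq2a}--\eqref{bq2} are precisely what make the logarithmic iteration (relations \eqref{Yp} and \eqref{Ycrideriv}) go through to give $T_\vep\le\exp(C\vep^{-p(p-1)})$. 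Your proposal stops at a single $\phi$ and refers only vaguely to a ``slice/logarithmic iteration''; without the $\phi_\lambda$ family and $b_q$, there is no route in the proposal to the sharp exponent at $p=p_S(n)$. This is where the bulk of the technical work in the paper lies, and it is essentially absent from the plan.
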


{\bf (\uppercase\expandafter{\romannumeral2}) Critical damping $D=\mathcal{O}((1+|x|)^{-1})$ with short range potential}\\

Concerning the potential term $V$,
when it is of short range and $D=0$,  as we have discussed, it is expected that
it will not affect the critical power $p_S$. Actually, when $D=0$ and $0\le V(x)\le\frac{\mu}{1+|x|^\beta}$ with $\beta>2$ and $\mu\geq 0$, Yordanov-Zhang \cite{YZ05} proved blow up result for $1<p<p_S(n)$ when $n\geq 3$.

Our second result addresses the problem
with
possibly critical damping $D=\mathcal{O}((1+|x|)^{-1})$, together with short range potential $V$. Here, the potential $V$
 is said to be of short range, if we have %$V_\infty(r)=r^2 V(r)$ satisfies
 $r V(r)\in L^1([1,\infty), dr)$. 

\begin{thm}
\label{regular}
Let $n\geq 2$. Suppose that the coefficients $V(x),~D(x)$ satisfy:\\
{\bf 1.}~~~$V(x),\ D(x)\ \in C(\R^n)\cap C^{\delta}(B_{\delta})$ for  some $\delta>0$;\\
{\bf 2.}~~~$V(r)\ge 0$ ($V$ is nontrivial for $n=2$),\
$D(r)+V(r)\ge -1$;\\
{\bf 3.}~~~$rV(r)\in L^1([1,\infty), dr)$;\\
{\bf 4.}~~~$rD(r)= d_\infty +r D_\infty(r)$,  $D_\infty(r)\in L^1([1, \infty), dr)$,
  for some $d_\infty\in \R$, $r D\in L^\infty$.\\
Then for any $1<p<\max(p_G(n), p_S(n+d_\infty))$, there are no any global energy solutions $u$ for \eqref{main}
with $f=0$, nontrivial, nonnegative and compactly supported $g$.
Moreover, there exist constants $C, \vep_0>0$
such that $T_\vep$ is bounded from above by%has to satisfy
\begin{equation}
\label{lifespan1a}
%T_\vep\le
 \left\{
 \begin{array}{ll}
 C\vep^{-(p-1)}
%\triangleq L_1
 &
1<p<\frac{n}{n-1}, d_\infty> (n-1)(\frac 2 p-1)
,\\
%\left\{\begin{array}{ ll}
 C\vep^{-(p-1)}\left(\ln \vep^{-1}\right)^{(p-1)\max(4-n, 1)}
%\triangleq  L_2
 %      &   n\ge 3 \\
% C\vep^{-1}\left(\ln \vep^{-1}\right)^{2}      &   n=2\end{array}\right.
 &
p=\frac{n}{n-1}, d_\infty> (n-1)(\frac 2 p-1)
,\\
 C \left(\varepsilon^{-1}
(\ln \vep^{-1})^{\max(3-n, 0)}\right) ^{\frac{p-1}{(n+1)-(n-1)p}}%\triangleq L_G
&  \frac{n}{n-1}< p<p_G(n),
 d_\infty> n-1-\frac{2}{p},\\
 C\vep^{-\frac{2p(p-1)}{\gamma_1(p)}}
 \left(\ln \vep^{-1}\right)^{-\frac{2(p-1)}{\gamma_1(p)}\max(3-n, 0)}
  %\triangleq  L_4
  &  1<p<\frac{n}{n-1},
 d_\infty\le (n-1)(\frac 2 p-1),
 \\
 C\vep^{-\f{2p(p-1)}{\gamma(p,n+{d_\infty})}}(\ln \vep^{-1})^{\frac{2(p-1)}{\gamma(p,n+{d_\infty})}} %\triangleq  L_5
&p=\frac{n}{n-1}, d_\infty\le (n-1)(\frac 2 p-1)\\
% C\varepsilon^{-\frac{2p(p-1)}{\gamma(p, n+d_\infty)}} & \mbox{for}\ \frac{n}{n-1}< p<\infty, d_\infty\le 1-n\\
 C\varepsilon^{-\frac{2p(p-1)}{\gamma(p, n+d_\infty)}}%\triangleq L_S
 & \frac{n}{n-1}
< p<p_S(n+d_\infty),
d_\infty\le n-1-\frac{2}{p}
  \end{array}
 \right.
\end{equation}
for $0<\vep\le\vep_0$, where
\begin{equation}\label{gamma1}
\gamma_1(p)= -(n+d_\infty-1)p(p-1)+2n(p-1)+2.
\end{equation}
\end{thm}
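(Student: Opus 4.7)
The plan is to implement the test-function method for the perturbed wave operator $-\Delta+D\pt+V$, following the general Yordanov--Zhang \cite{YZ05} template but using two different positive solutions of the associated elliptic dual problem (exponentially growing vs.\ polynomially decaying) in different sub-cases of \eqref{lifespan1a}. The central analytical input is the construction, on all of $\R^n$, of a positive radial $\Phi_1\in C^2(\R^n\setminus\{0\})\cap C(\R^n)$ solving
\begin{equation*}
-\Delta\Phi_1+(D(x)+V(x))\Phi_1+\Phi_1=0,\qquad \Phi_1(r)\sim c\,r^{(d_\infty-n+1)/2}e^{r}\ \text{as}\ r\to\infty,
\end{equation*}
together with a companion positive solution $\Phi_0$ of $-\Delta\Phi_0+(D+V)\Phi_0=0$ behaving like $r^{-(n+d_\infty-2)}$ at infinity. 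Both are obtained from the ansatz $\Phi=r^{\alpha}e^{\lambda r}w(r)$ ($\lambda\in\{0,1\}$, $\alpha$ the indicial exponent): the short-range hypotheses $rV\in L^{1}$ and $D_\infty\in L^{1}$ turn the radial ODE for $w$ into an $L^{1}$-perturbation of a linear constant-coefficient equation, so a contraction on $[R_0,\infty)$ gives $w(\infty)=1$; standard ODE continuation extends $w$ inward, and the sign condition $D+V\ge -1$ via a maximum-principle comparison forces positivity on the whole of $\R^n$.

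Since $D$ and $V$ are time independent, $\Psi(t,x)=e^{-t}\Phi_1(x)$ formally satisfies the adjoint equation $\Psi_{tt}-\Delta\Psi-D\Psi_t+V\Psi=0$. Fixing $T>0$, a cutoff $\eta\in C_c^\infty([0,1))$ equal to $1$ on $[0,1/2]$, and $p'=p/(p-1)$, I would insert $\Psi_T(t,x)=\eta(t/T)^{2p'}e^{-t}\Phi_1(x)$ into \eqref{weaksol}. The purely spatial contribution cancels by the elliptic equation for $\Phi_1$, and the residual involves only derivatives of $\eta(t/T)$ and is therefore supported in $\{T/2\le t\le T\}$. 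A single application of H\"older then reduces the identity to an inequality of the form
\begin{equation*}
c\,\vep\!\int_{\R^n}\!(g+Df)\Phi_1\,dx+\!\int_0^T\!\!\int_{\R^n}\!|u|^p\Psi_T\,dxdt\le C\Bigl(\int_0^T\!\!\int|u|^p\Psi_T\,dxdt\Bigr)^{1/p}\Bigl(\int_{T/2}^T\!\!\int_{|x|\le t+R}\!\Psi_T^{-1/(p-1)}dxdt\Bigr)^{1/p'}.
\end{equation*}
The asymptotics of $\Phi_1$ yield the sharp bound
$\int_{T/2}^T\!\int\Psi_T^{-1/(p-1)}\,dxdt\lesssim T^{\,2-\frac{p'}{2}(n+d_\infty-1)}$
(with logarithmic corrections when the exponent of $T$ is integral), and it is through this exponent that the effective dimension gets shifted from $n$ to $n+d_\infty$.

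In the Strauss regime $n/(n-1)< p<p_S(n+d_\infty)$, $d_\infty\le n-1-\tfrac{2}{p}$, the combined estimate produces a closed inequality that forces $T_\vep<\infty$ precisely when $\gamma(p,n+d_\infty)>0$, giving the last line of \eqref{lifespan1a}. For the critical case and for the subcritical ranges with $p\le n/(n-1)$ the same inequality is iterated in the standard Yordanov--Zhang fashion to produce the sharp power of $\vep$ in the last three lines. When $d_\infty>(n-1)(\tfrac{2}{p}-1)$, the exponential growth of $\Phi_1$ makes the volume factor too small for the data term to dominate, and I would instead rerun the whole argument with the algebraically decaying $\Phi_0$ and $e^{-t}$ replaced by $1$. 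The volume factor then becomes polynomial in $T$, and the same computation reproduces the Fujita/Glassey-type bounds in the first three lines of \eqref{lifespan1a}; the $(\ln\vep^{-1})$ factor at the borderline $p=n/(n-1)$ and the $(3-n)_+$, $(4-n)_+$ logarithmic corrections come from the critical integrability of $\Phi_0$ at infinity in low dimensions.

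The hardest step is the construction of $\Phi_1$ (and $\Phi_0$). Since $D+V$ is only bounded below, not necessarily nonnegative, and the operator is non-self-adjoint, neither variational methods nor standard Agmon-type exponential-decay results apply. The precise asymptotic profile has to be extracted directly from the radial ODE by a contraction argument at infinity, and global positivity then forced using only the one-sided assumption $D+V\ge-1$ together with a carefully chosen comparison function at the origin; matching the asymptotic behaviour at infinity quantitatively enough to identify the sharp dimensional shift $n\mapsto n+d_\infty$ in the volume factor is what drives the whole scheme.
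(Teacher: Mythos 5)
There is a genuine gap in the proposal: the wrong elliptic equation for $\Phi_0$, a Hölder step that produces a divergent integral, and a missing combination step that is required to reach the Strauss exponent.

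First, since $\Phi_0$ is taken time-independent, the damping term in the adjoint $\partial_t^2-\Delta-D\partial_t+V$ vanishes on $\Phi_0$, so the correct elliptic equation is $\Delta\Phi_0=V\Phi_0$, \emph{not} $\Delta\Phi_0=(D+V)\Phi_0$. Because $V$ is short range, the positive solution satisfies $\Phi_0\simeq 1$ for $n\ge 3$ (resp. $\Phi_0\simeq\ln(2+r)$ for $n=2$), not $r^{-(n+d_\infty-2)}$; it is precisely this \emph{unshifted} behavior that is responsible for the unshifted exponent $p_G(n)$ in the first three lines of \eqref{lifespan1a}. (The shift to $n+d_\infty$ only enters via $\phi_1$ and the Strauss part.)

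Second, your Hölder reduction with second factor $\int_{T/2}^T\!\int\Psi_T^{-1/(p-1)}\,dxdt$ cannot be right: $\Psi_T^{-1/(p-1)}=\eta_T^{-2p'/(p-1)}e^{t/(p-1)}\phi_1^{-1/(p-1)}$ blows up as $\eta_T\to0$, and on the cone $r\le t+R$ the time factor $e^{t/(p-1)}$ overwhelms the spatial decay $e^{-r/(p-1)}$, so the integral is exponentially large, not a power of $T$. The correct split keeps a \emph{positive} power of $e^{-\la t}\phi_\la$ in the second factor; then $e^{-\la t}e^{\la r}=e^{-\la(t-r)}$ is bounded on the cone and the integral is $\les T^{-p'+(n+d_\infty+1)/2}$, which matches the stated bound only after correction.

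Third, even with the corrected Hölder/Young step, using $\Phi_1$ alone yields blow-up only for $p<p_G(n+d_\infty)$, not $p<p_S(n+d_\infty)$. The paper reaches the Strauss threshold by a genuine combination of the two test functions: it bounds the residual term involving $\Phi_1$ via Hölder with the weight $\phi_0^{1/p}$ on $|u|$ and $\phi_0^{-p'/p}\Phi_1^{p'}$ in the remaining factor, and then feeds back the $\phi_0$-estimate from \eqref{step1}. This coupling is the key step that produces $\gamma(p,n+d_\infty)$ and hence the Strauss exponent; it is absent from your plan. (Also, the log factors in \eqref{lifespan1a} do not come from any critical integrability of $\Phi_0$ at infinity: for $n\ge 3$ they arise only from the borderline $p=n/(n-1)$, and for $n=2$ additionally from $\Phi_0\simeq\ln r$.)
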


See Figure \ref{fig:fig-1} for the region of $(d_\infty, p)$ where we have blow up results. Here $L_{j}$ ($j=1, 2, \cdots, 6$) denote the $j$-th lifespan, with $L_S=L_6$ and $L_G=L_3$.

\begin{figure}\label{fig:fig-1}
 \centering
  \includegraphics[width=0.5\textwidth]{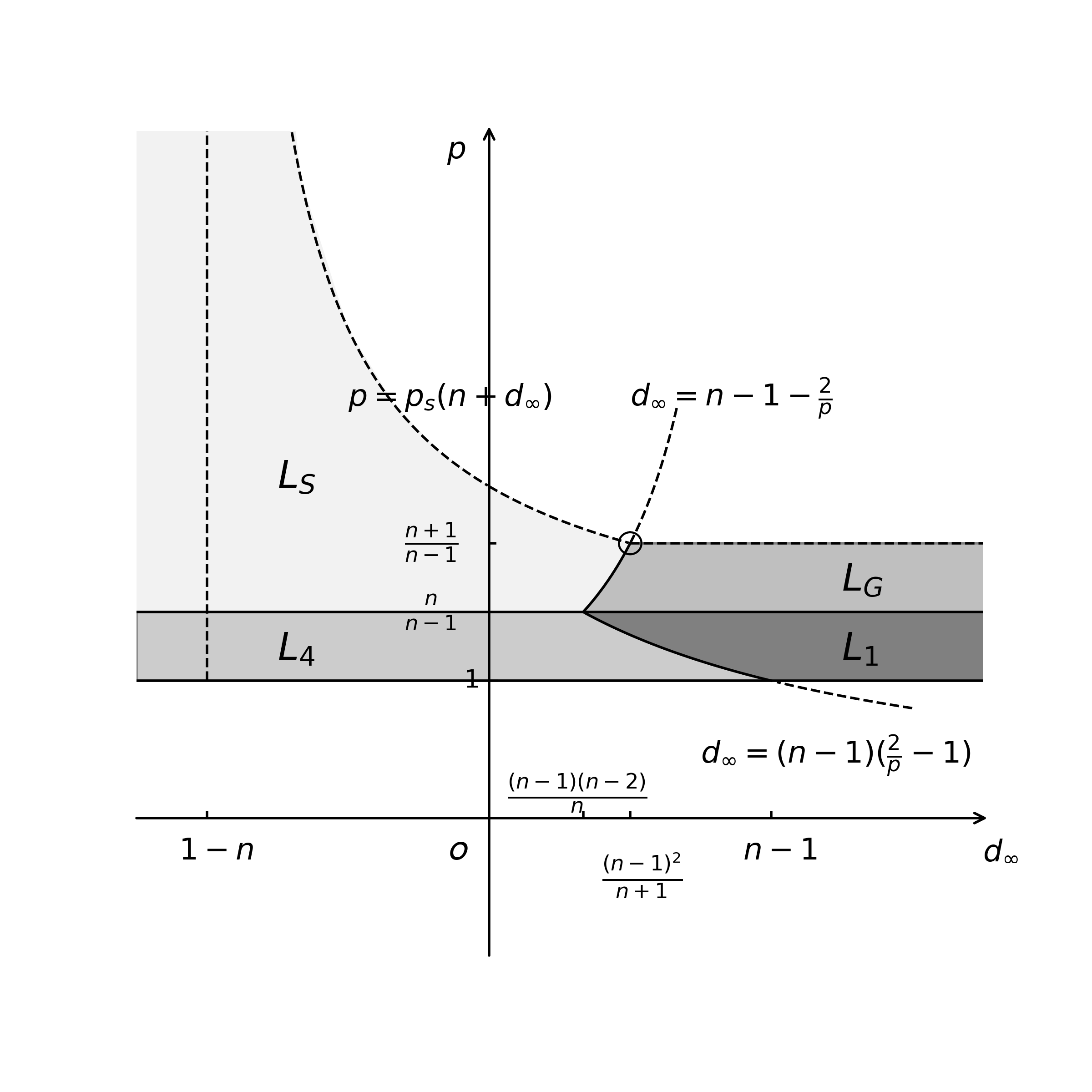}
\caption{Theorem \ref{regular}: critical powers and lifespan estimates}
\end{figure}

As mentioned above, when $D=0$, Yordanov-Zhang \cite{YZ05} has shown blow-up result for $1<p<p_S(n)$ and $n\ge 3$. The restriction for $n\ge 3$ comes from the proof of the existence and asymptotic behavior of test functions. By using ODE and elliptic theory, we could include the case $n=2$, in the case of the
radial nontrivial potential.
%generalize the corresponding result to the $2$ dimension case.
%\end{rem}

Moreover,
when $d_\infty\le 1-n$, we have $p_S(n+d_\infty)=\infty$ and so there are no global   solutions for any $p\in (1,\infty)$. A specific example
could be $D=\frac{d_\infty}{d_\infty+r}$ and $V=V_\infty(1+r)^{-2}$.

\begin{rem}
Through scaling, we see that the technical condition $1+D(r)+V(r)\ge 0$ could be replaced by the slightly general condition
$\lambda_0^2+\la_0 D(r)+V(r)\ge 0$ for some $\lambda_0>0$.
\end{rem}

{\bf (\uppercase\expandafter{\romannumeral3}) Critical damping and potential $D=\mathcal{O}(|x|^{-1})$, $V=\mathcal{O}(|x|^{-2})$ near spatial infinity}\\

Finally, when both the damping and potential terms exhibit certain critical nature, it turns out that
we have the blow-up phenomenon under the shifted Strauss exponent and a shifted Glassey exponent, shifted by $\rho(v_\infty)$ or $d_\infty$.

In \cite{GKW19}, Georgiev, Kubo and Wakasa
showed that the critical power for the radial solutions is the shifted Strauss exponent $p=p_S(3+2)$, for a special case in $\R^3$, with damping and potential coefficients satisfying the relation
$$V(r)=-D'(r)/2+D^2(r)/4,$$
where $D(r)$ is a positive decreasing function in $C([0, \infty))\cap C^1(0, \infty)$ satisfying $D(r)= 2/r$ for $r\ge r_0>0$.

In the case of the problem with sample scale-invariant ``critical'' damping and potential,
$$D(x)=\frac{d_\infty}{|x|}, V(x)=\frac{v_\infty}{|x|^2}\ ,$$
with %constants $A, B$ satisfying
%\begin{equation}\label{AB}
%\begin{aligned}
$0\le d_\infty<n-1+2\rho(v_\infty)$, $v_\infty>-(n-2)^2/4$,
%\end{aligned}\end{equation}
thanks to the specific structure of the damping and potential terms,
Dai, Kubo and Sobajima \cite{DKS21} were able to construct explicit test functions by using hypergeometric functions and obtain the upper bound of the lifespan for $$\frac{n+\rho(v_\infty)}{n+\rho(v_\infty)-1}<p\le p_c\ .$$
See also Ikeda-Sobajima \cite{IS2} for the prior blow-up results for $\frac{n}{n-1}<p\le p_S(n+d_\infty)$,
 when $v_\infty=0$, $n\geq 3$ and $0\le d_\infty<(n-1)^2/(n+1)$.

As we can see, the above results heavily depends on the specific structure of the damping and potential terms. Our next theorem addresses on the problem with a general class of damping and potential terms, which
exhibit certain critical nature.

\begin{thm}
\label{general}
Consider \eqref{main} with $n\geq 2$.
Assume that the coefficients $V(x),~D(x)$ satisfy:

{\bf 1.} ~~~ $V(r), D(r)\in C(\R_{+})$, $V(r)\ge 0$, $D+V\ge -1$;

{\bf 2.} For the part near spatial infinity, $r>1$,

~~~ $\displaystyle r^2 V(r)=v_\infty+ r V_\infty(r)$,  $V_\infty(r)\in L^1_{r>1}$, $v_\infty\ge 0$ (with $v_\infty>0$~ for~ $n=2$);

~~~ $\displaystyle rD(r) =  d_\infty + r D_\infty(r),\ D_\infty(r) \in L^1_{r>1},\ rD_\infty\in L^\infty_{r>1}$ with $d_\infty\in\R$;

{\bf 3.}~~~ For $r\in (0,1]$,  $D=\mathcal{O}(r^{\theta-2})$ for some $\theta\in [0, 2]$, and

~~~ $\displaystyle r^2V(r)= v_0+r V_0(r),\ V_0(r)\in L^1_{loc}$ for $n\ge 3$ or $v_0>0$;

~~~ $\displaystyle r^2 D(r)= d_0 +r D_0(r),\ D_0(r)\in L_{loc}^1$ for $n\ge 3$ or $v_0+ d_0>0$.\\
For $n=2$ and the endpoint case, instead of the assumption {\bf 3}, we assume the analytic conditions for some $\de>0$:

{\bf 3'.}~~~ ~~~ $\displaystyle r^2V(r)=\sum_{j\ge 1}b_j r^j$ for $r\in (0,\de)$ if $v_0=0$;

~~~ $\displaystyle  r^2(V+ D)=\sum_{j\ge 1}c_j r^j$ for $r\in (0,\de)$ if $v_0+d_0=0$.

Suppose that \[p_0:=\f{n+\rho(v_0)}{n+\rho(v_0+\min(0, d_0))+\theta-2}<p_c=\max(p_S(n+d_\infty), p_G(n+\rho(v_\infty)),\]
then
for any $p_0<p<p_c$, any energy solutions $u$ for \eqref{main},
with
$f=0$, nontrivial, nonnegative and compactly supported $g$,
%nontrivial, nonnegative and compactly supported data
will blow up in finite time.
Moreover, there exist constants $C, \vep_0>0$
such that $T_\vep$ has to satisfy
\begin{equation}
\label{lifespan1}
T_\vep \le
 \left\{
 \begin{array}{ll}
 C\vep^{-(p-1)}& \mbox{if}\  p\in(p_2, p_3)\neq \emptyset \\
 C\vep^{-(p-1)}\left(\ln \vep^{-1}\right)^{p-1}& \mbox{if}\ p=p_3>p_2\\
  C\vep^{-\f{p-1}{n+\rho(v_\infty)+1-(n+\rho(v_\infty)-1)p}}& \mbox{if}\ p\in (\max(p_2, p_3), p_G(n+\rho(v_\infty)))\neq \emptyset\\
% C\vep^{-\f{p-1}{1-(n+\rho(v_\infty)-1)\max(0, p-p_3)}} \left(\ln \vep^{-1}\right)^{(p-1)\de_{p, p_3}} & \mbox{if}\ p\in (p_2, p_G(n+\rho(v_\infty)))\neq \emptyset\\
 C\vep^{-\f{2p(p-1)}{\gamma_2(p)}}& \mbox{if}\
p\in(p_0, \min(p_3, p_5))\neq \emptyset
 \\
 C\vep^{-\f{2p(p-1)}{\gamma(p,n+{d_\infty})}}(\ln \vep^{-1})^{\frac{2(p-1)}{\gamma(p,n+{d_\infty})}} & \mbox{if}\ p=p_3\in (p_0,  p_S(n+{d_\infty}))\neq \emptyset\\
 C\vep^{-\f{2p(p-1)}{\gamma(p,n+{d_\infty})}} & \mbox{if}\ p\in (\max(p_0, p_3),  p_S(n+{d_\infty}))\neq \emptyset\\
 C\vep^{-\f{2(p-1)}{(n+d_\infty+1)-(n+d_\infty-1)p}}& \mbox{if}\ p\in
  (p_4, p_0]\cap (p_4, p_G(n+d_\infty))
\neq \emptyset
 \end{array}
 \right.
\end{equation}
for $0<\vep\le\vep_0$.
Here
%2+2(n+\rho(v_\infty))(p-1) -(n+d_\infty-1)p(p-1),$$ and
%$$\gamma_2(p):= -(n{+d_\infty}-1)p^2+(3n+2\rho(v_\infty){+d_\infty}-1)p+(2-2n-2\rho(v_\infty)),$$ and
$p_i$ are defined as follows:
$$
p_0=\max(p_1, p_2), \ p_1:=
\f{n+\rho(v_0)}{n+\rho(v_0+d_0)+\theta-2},\
p_2:=
\f{n+\rho(v_0)}{n+\rho(v_0)+\theta-2}$$
\[p_3:=\f{n+\rho(v_\infty)}{n+\rho(v_\infty)-1}
\ ,\
p_4:=\f{n+\rho(v_0+d_0)}{n+\rho(v_0+d_0)+\theta-2}.\]
$$ p_5=\left\{\begin{array}{ll}
%\frac{n+1+\sqrt{(n+1)^2+8(n-1)}}{2(n-1)}
\frac{3n+d_\infty+2\rho(v_\infty)-1+\sqrt{(3n+d_\infty+2\rho(v_\infty)-1)^2-8(n+d_\infty-1)(n+\rho(v_\infty)-1)}}{2(n+d_\infty-1)}
& n+d_\infty>1\\
\infty & n+d_\infty\le 1\ ,
\end{array}\right.$$
and
\beeq\label{eq-gamma_2}
\gamma_2(p):= 2(n+\rho(v_\infty)-1)(p-p_3)+\ga(p, n+d_\infty)\ ,\eneq
where $\ga$ is given in \eqref{quadratic} and $p_5$ is the positive root of $\ga_2(p)=0$ when $n+d_\infty>1$.
\end{thm}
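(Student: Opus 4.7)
The plan is to apply the test-function method to the weak formulation \eqref{weaksol}, using two carefully constructed positive radial test functions adapted to the pair $(D, V)$: a Strauss-type weight $\Psi_1(t,x) = e^{-t}\phi(|x|)$, driving the blow-up for $p < p_S(n+d_\infty)$, and a stationary Glassey/Fujita-type weight $\Psi_0(x) = \psi(|x|)$, driving it for $p < p_G(n+\rho(v_\infty))$. Asking that each solves the dual linear equation $\Psi_{tt} - \Delta\Psi - D\Psi_t + V\Psi = 0$ reduces the construction to two elliptic ``eigenvalue'' problems
\[
-\phi'' - \f{n-1}{r}\phi' + \bigl(1 + D(r) + V(r)\bigr)\phi = 0, \qquad -\psi'' - \f{n-1}{r}\psi' + V(r)\psi = 0,
\]
on $r > 0$. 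Global positivity of $\phi$ and $\psi$ is enforced by the sign assumptions $D + V \ge -1$ and $V \ge 0$ in hypothesis \textbf{1} via a maximum/comparison principle.

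\textbf{Step 1 (construction with sharp asymptotics).} Using the decompositions $rD = d_\infty + rD_\infty$ and $r^2 V = v_\infty + rV_\infty$ with $D_\infty, V_\infty \in L^1_{r>1}$ (hypothesis \textbf{2}), I would build the growing solution at infinity by a Liouville--Green/Volterra perturbation of the prototype equations, extracting the sharp asymptotics
\[
\phi(r) \sim c_\infty\, r^{(d_\infty - n + 1)/2}\, e^r, \qquad \psi(r) \sim c'_\infty\, r^{\rho(v_\infty)} \qquad (r \to \infty).
\]
Near the origin, hypothesis \textbf{3} (or \textbf{3'} in the $n = 2$ and endpoint regimes) renders the equations Fuchsian with indicial exponents determined by $v_0$ and $d_0$; a Frobenius construction produces subordinate positive local solutions with power behavior $r^{\rho(v_0 + d_0)}$ (for $\phi$) and $r^{\rho(v_0)}$ (for $\psi$), with \textbf{3'} absorbing the logarithmic resonances that arise when the indicial roots collide. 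Gluing the local and asymptotic branches via the nonnegativity of the zeroth-order term produces globally positive $\phi, \psi$ on $(0, \infty)$. The exponents assembled in this way are exactly what define the thresholds $p_0, p_1, \ldots, p_5$ in the statement.

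\textbf{Step 2 (integral inequality and Kato iteration).} Substituting $\Psi_1$ and $\Psi_0$ (the latter suitably truncated to meet the admissibility condition of Definition \ref{def1}, especially the $n = 2$ integrability requirement on $D\Psi$) into \eqref{weaksol} and invoking the dual equation to cancel the linear terms yields
\[
\int_0^T\!\!\int_{\R^n} |u|^p \Psi_j\, dx\, dt \;\ges\; \vep \int_{\R^n} g(x)\, \phi_j(x)\, dx, \qquad j = 0, 1,
\]
with $\phi_0 := \psi$ and $\phi_1 := \phi$. Setting $F(t) := \int u(t,x)\phi(x)\, dx$ and $G(t) := \int u(t,x)\psi(x)\, dx$, the support condition $|x| \le t + R$ and H\"older's inequality, with $\int_{B_{t+R}} \Psi_j^{p'}\, dx$ estimated from the explicit asymptotics of Step 1, deliver a coupled system of Kato-type differential/integral inequalities. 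Iteration along the classical John--Sideris--Yordanov--Zhang lines then yields the seven regimes of \eqref{lifespan1}; the intermediate exponent $\ga_2(p)$ in \eqref{eq-gamma_2} arises precisely in the range where the Strauss- and Glassey/Fujita-type decays coexist, their balance producing the threshold $p_5$ as the positive root of $\ga_2(p) = 0$.

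\textbf{Principal obstacle.} The main technical hurdle is the global construction of a positive $\phi$ on $(0, \infty)$ with \emph{sharp} quantitative large-$r$ asymptotics under only the $L^1$-perturbation regularity hypothesis on $D_\infty, V_\infty$; any imprecision in the leading exponent $(d_\infty - n + 1)/2$ directly degrades the lifespan exponents $\ga_2(p)$ and $\ga(p, n + d_\infty)$. A finer, secondary difficulty is the $n = 2$ setting and the endpoint cases, where the Fuchsian indicial roots at $r = 0$ can collide and generate logarithmic corrections that must be absorbed by the analytic hypothesis \textbf{3'}; these near-resonances are the source of the logarithmic factors that decorate several cases of \eqref{lifespan1}, and their sharp bookkeeping is the most intricate part of the iteration.
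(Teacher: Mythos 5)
Your outline correctly identifies the two essential test functions --- a stationary one $\psi$ with $\Delta\psi = V\psi$ growing like $r^{\rho(v_\infty)}$, and an exponential one $\phi$ with $\Delta\phi = (1 + D + V)\phi$ growing like $r^{(d_\infty - n + 1)/2}e^{r}$ --- and the corresponding elliptic constructions are indeed what the paper sets up (Lemmas \ref{lem1} and \ref{lem4}). Two points, however, diverge from the paper in ways that matter.

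First, the mechanism by which the final inequalities are derived is different. You propose defining functionals $F(t) = \int u\,\phi\,dx$, $G(t) = \int u\,\psi\,dx$ and running a Kato/John--Sideris--Yordanov--Zhang iteration on a coupled ODE system. The paper instead applies the Mitidieri--Pokhozhaev/Ikeda--Sobajima cutoff-test-function method: it tests \eqref{weaksol} with $\Psi_\lambda = \eta_T^{2p'}(t)\Phi_\lambda(t,x)$ (for $\lambda = 0, 1$), obtains a single integral inequality \eqref{eq-key}, and extracts the lifespan directly as a polynomial inequality in $T$ with no iteration. Crucially, the regime governed by $\gamma_2(p)$ and $p_5$ does not come from running the two weights in parallel but from a \emph{combination} step: the linear-in-$|u|$ remainder from the $\Phi_1$-test is estimated by H\"older against the $\Phi_0$-weighted $L^p$ norm (i.e.\ one inserts $\phi_0^{1/p}\phi_0^{-1/p}$), producing the hybrid bound $\vep^p T^p \lesssim F_0 F_1^{p-1}$ of \eqref{large_p}. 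A naive Kato-type iteration on $F$ and $G$ separately would not obviously give $\gamma_2(p)$; you would need to explain how the coupling is introduced, and without that this branch of the theorem is not yet reached by your proposal.

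Second, your explanation of the logarithmic factors in \eqref{lifespan1} is wrong. You attribute them to indicial-root collisions at $r = 0$ absorbed by hypothesis \textbf{3'}. In fact \textbf{3'} is used only to guarantee, via Frobenius, that a positive local solution exists near the origin when $v_0 = 0$ or $v_0 + d_0 = 0$ in two dimensions; it does not inject logarithms into the final estimates. The logarithms in \eqref{lifespan1} come from the large-$r$ integral $\int_1^{T+R} r^{\,n - 1 + \rho(v_\infty) - p'}\,dr$, which produces $\ln T$ precisely at the critical exponent $p = p_3$ (i.e.\ $p' = n + \rho(v_\infty)$), and then propagate through the combination step. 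Attributing them to near-resonances at the origin would lead you to place them in the wrong cases of \eqref{lifespan1} and would not account for the condition $p = p_3$ in the statement.
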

\begin{rem}\label{rem-6}
We observe that, in our statement, besides the expected upper bound of the blow up range, we have certain lower bound, which depends on the local behavior of $V(x)$ and $D(x)$ near the origin, as well as $v_\infty$ for $p_3$. Heuristically, we expect that the lower bounds for the blow-up range are just technical conditions. In other words, we conjecture that we still have blow up results for
any $p\in (1, p_c)$.
%any $p\in (1, \max(p_S(n+d_\infty), p_G(n+\rho(v_\infty))))$.
\end{rem}

\begin{rem}
When
$n=3$, $v_0=d_0=0$, $v_\infty=d_\infty=2$,
and $\theta=2$, we have $p_1=p_2=1$, $p_3=4/3$, and thus recover the blow-up result  in \cite{GKW19} for $1<p<p_S(5)$. 
 When $d_0=0$, $d_\infty=A$, $v_0=v_\infty=B$ and $\theta=1$, our result recovers and generalizes the subcritical results in \cite{DKS21}. The blow-up and lifespan estimate for the ``critical'' power are lost in our result, which seems to be much more delicate to obtain, in our general setting for the damping and potential functions. One of the unexpected features is that we could
obtain blow up results for
highly singular damping term near the origin. For example, for any
$d_0, v_0, d_\infty, v_\infty>0$ such that $p_0<p_c$, the problem with
$$D= \frac{d_0}{r^2}+\frac{d_\infty}{ r},\ V=\frac{v_0}{ r^{2}}+ \frac{2(v_\infty-v_0)\arctan r}{\pi r^2}\ ,$$
could not admit global solutions in general, for $p\in (p_0, p_c)$, despite the strong damping effect near the origin.
% This phenomenon is
\end{rem}

Concerning the comparison of the exponents $p_i$, $p_S$ and $p_G$, they depend on the the damping, potential and dimension.
For example, we observe from
\eqref{eq-gamma_2} and
\eqref{quadratic} for $\ga_2$ and $\gamma(p,n)$, whose positive roots give $p_5$ and $p_S$, as well as the obvious relation $p_3<p_G(n+\rho(v_\infty))$, that
we always have $$\min(p_3, p_5)<p_c\ .$$
As $n+\rho(d_\infty)-1>0$, we have $\gamma_2(p)<\gamma(p, n+d_\infty)$ for $p<p_3$, which shows that the current lifespan estimate for $p_0< p< \min(p_3, p_5)$ is weaker than the standard one.
Also, it is clear that $p_1<p_2$ iff $d_0>0, \theta<2$, which is also equivalent to $p_4<p_2$.

In relation with Remark \ref{rem-6}, we would like to
show blow up results for any $p\in (1, p_c)$.
 It is clear that when $\theta=2$ (which ensures that $d_0=0$), we have
 $p_0=1$ and so is the blow up results for $p\in (1, p_c)$. Moreover, by examining the proof of Theorem \ref{general},
in particular the estimates \eqref{F0} and \eqref{step1},
  we find that the technical condition $p_3$ for the lifespan estimates could also be avoided if the damping term vanishes $D=0$ near spatial infinity.
That is, when $D(r)\in C_c([0,\infty))$ (so that $d_0=d_\infty=0$ and $\theta=2$), we have
$$
F_0=
\int_{T/2}^T\int_{B_{t+R}}
T^{-2p'}\phi_0
dxdt \les T^{-2p'+\rho(v_\infty)+n+1}\ ,$$
and so is the following
\begin{cor}\label{thm-VanDamp}
Under the same assumptions as in Theorem \ref{general}, with an additional assumption that $D(|x|)\in C_c([0,\infty))$.
Then we have
blow up results for any $1<p<p_c=\max(p_G(n+\rho(v_\infty)), p_S(n))$. In addition, we have, for
some constants $C, \vep_0>0$,
\begin{equation}
\label{eq-lifespan2}
T_\vep \le
 \left\{
 \begin{array}{ll}
L_S= C\vep^{-\f{2p(p-1)}{\gamma(p,n)}} & \mbox{if}\ p\in (1,  p_S(n))\\
L_G= C\vep^{-\f{p-1}{n+\rho(v_\infty)+1-(n+\rho(v_\infty)-1)p}}& \mbox{if}\ p\in (1, p_G(n+\rho(v_\infty)))
 \end{array}
 \right.
\end{equation}
for any $0<\vep\le\vep_0$.
\end{cor}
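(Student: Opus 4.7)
The plan is to revisit the proof of Theorem \ref{general} under the extra assumption $D\in C_c([0,\infty))$ and to sharpen the key integral estimate for $F_0$. First one observes that since $D$ is continuous with compact support, we have $d_0 = d_\infty = 0$ and the local parameter $\theta = 2$ is admissible; therefore the definitions of the $p_i$ yield
\[
p_1 = p_2 = p_4 = \frac{n+\rho(v_0)}{n+\rho(v_0)} = 1, \qquad p_0 = \max(p_1,p_2) = 1,
\]
and moreover $p_S(n+d_\infty) = p_S(n)$, $\gamma(p, n+d_\infty) = \gamma(p, n)$. Hence all the ``lower'' thresholds in Theorem \ref{general} collapse to $1$, and the only remaining obstruction is the upper threshold $p_3 = (n+\rho(v_\infty))/(n+\rho(v_\infty)-1)$ that gates several of the lifespan regimes.

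The core of the argument is to remove this $p_3$ obstruction by sharpening $F_0$, as indicated in the excerpt. In the general setting, the asymptotics of the radial test function $\phi_0$ at spatial infinity are driven jointly by $d_\infty$ and $v_\infty$; once $D$ vanishes outside a compact set, the damping contributes only a compactly supported correction to the dual linear operator, and the asymptotics are governed solely by the Euler-type operator $\partial_r^2 + \frac{n-1}{r}\partial_r - \frac{v_\infty}{r^2}$. This yields $\phi_0 \sim r^{\rho(v_\infty)}$ prior to the exponential factor, and straightforward integration over the light cone gives
\[
F_0 = \int_{T/2}^{T}\!\int_{B_{t+R}} T^{-2p'}\phi_0\,dx\,dt \lesssim T^{-2p'+\rho(v_\infty)+n+1},
\]
exactly as announced in the excerpt. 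Crucially, this direct computation bypasses the intermediate step in Theorem \ref{general} that forced $p>p_3$.

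With the improved $F_0$ in hand, the remaining iteration proceeds verbatim as in Theorem \ref{general}. For the Strauss branch, combining $\phi_0$ with a Yordanov--Zhang type angular factor $e^{-t}\psi_1(x)$ and running the standard nonlinear capacity/iteration argument produces $L_S = C\vep^{-2p(p-1)/\gamma(p,n)}$ for $1<p<p_S(n)$. For the Glassey branch, using the separated test function $e^{-\lambda t}\phi_0(x)$ with $\lambda$ chosen as in Theorem \ref{general} yields $L_G = C\vep^{-(p-1)/(n+\rho(v_\infty)+1-(n+\rho(v_\infty)-1)p)}$ for $1<p<p_G(n+\rho(v_\infty))$. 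Both bounds are now valid on the full subcritical range because the $p_3$ gate has been removed.

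The main obstacle is the careful bookkeeping required to identify which step in the proof of Theorem \ref{general} introduces the threshold $p_3$ and to verify that, under the compact support hypothesis on $D$, the corresponding boundary and remainder contributions coming from the damping--test-function interaction vanish identically outside a compact set in $x$, so that the sharpened estimate for $F_0$ propagates through the iteration unchanged. Once this localization is isolated, no new analytical input is needed: the construction and positivity of $\phi_0$ already carried out in Theorem \ref{general} reduce, in the present setting, to the purely potential-driven ODE/elliptic problem whose analysis is already in hand.
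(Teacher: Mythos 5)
Your proposal essentially reproduces the paper's own sketch: compute that $d_0=d_\infty=0$, $\theta=2$ forces $p_0=1$, and then claim the sharper estimate $F_0 \lesssim T^{-2p'+\rho(v_\infty)+n+1}$ so that the $p_3$ gate disappears from both branches. However, there is a genuine gap at precisely the step you (and the paper) treat as ``straightforward''. Recall that by its definition from \eqref{step1},
\[
F_0 = C\int_{T/2}^T\int_{B_{t+R}}\bigl(T^{-2}+|D|T^{-1}\bigr)^{p'}\phi_0\,dx\,dt.
\]
When $D\in C_c$ but $D\not\equiv 0$, the contribution from $\supp D$ does not vanish: since $\int_{\supp D}|D|^{p'}\phi_0\,dx$ is a fixed positive constant, one gets
\[
F_0 \;\gtrsim\; T^{-p'}\int_{T/2}^T\int_{\supp D}|D|^{p'}\phi_0\,dx\,dt \;\simeq\; T^{1-p'},
\]
and this term dominates $T^{-2p'+\rho(v_\infty)+n+1}$ exactly when $p'>\rho(v_\infty)+n$, i.e.\ for $1<p<p_3$. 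So for a nontrivial compactly supported damping the displayed identity $F_0=\int T^{-2p'}\phi_0\,dx\,dt$ simply does not hold, and $F_0\lesssim T^{1-p'}$ is the best one extracts for $p<p_3$. Feeding this into the combination $\vep^pT^p\lesssim F_0F_1^{p-1}$ only returns $T_\vep\lesssim\vep^{-2p(p-1)/\gamma_2(p)}$, and since $\gamma_2(p)<\gamma(p,n)$ for $p<p_3$, this is strictly weaker than $L_S$; similarly $\vep\lesssim T^{1-p'}$ only gives $T_\vep\lesssim\vep^{-(p-1)}$, which is weaker than $L_G$ on $(1,p_3)$ because there the denominator in $L_G$ exceeds $1$. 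Thus neither your argument nor the paper's sketch actually establishes the claimed $L_S$, $L_G$ bounds on $(1,p_3)$ unless $D\equiv 0$. The blow-up statement for $1<p<p_c$ is unaffected (one still has a negative exponent and hence finite $T_\vep$), but the lifespan upper bounds in \eqref{eq-lifespan2} are not proved on the full stated range by this method, and you should either restrict to $D\equiv 0$ or supply a separate argument to kill the $T^{1-p'}$ contribution coming from the compact region where $D\neq 0$.
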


See Figure \ref{2D-bu}
 for the region of $(v_\infty, p)$ where we have blow up results for $n=2$. As we assume $v_\infty\ge 0$, we always have $p_S(n)> p_G(n+\rho(v_\infty))$ and the second upper bound $L_G$ is effective (i.e., $L_G\le L_S$ for $\vep\ll 1$ if and only if
 $(2\rho(v_\infty)+n-1)p\le 2$, which is nonempty only if $n\le 2$.

 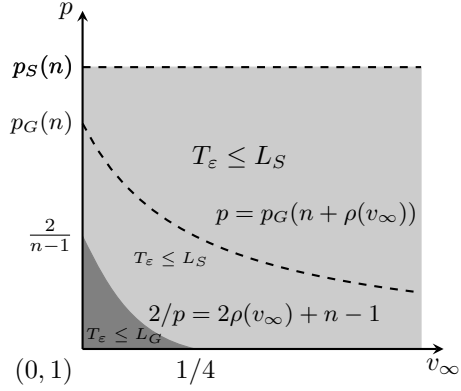
\begin{figure}%[h]
\centering
\begin{tikzpicture} [scale=1.5]
\filldraw[black!20!white] (0,2)--(0,3.5)--(3,3.5)--(3,1)
--(1,1) to[out=167,in=-64] (0,2);
\filldraw[black!50!white] (0,2)--(0,1)--(1,1) to[out=167,in=-64] (0,2);
\node[right] at (0.5,1.3) {\small $2/p=2\rho(v_\infty)+n-1$};
\node[left] at (0.8,1.12) {\tiny $T_\vep \le L_G$};
\draw[thick,-stealth] (0,1)--(0,4) node[left]{$p$};
\draw[thick,-stealth] (0,1)--(3.2,1) node[below]{$v_\infty$};
\node[below] at (1,1) {$1/4$}; 
\node[left] at (0,2) {$\frac{2}{n-1}$};
\node[left] at (0,3) {\small $p_G(n)$};
\node[left] at (0,3.5) {\small $p_S(n)$};
%\node[above] at (1.1,3.5) {\small $p=p_S(n)$};
\draw[thick, dashed] (0,3.5)--(3, 3.5);
\node[left] at (0,3.5) {\small $p_S(n)$};

\draw[thick, dashed, domain=0:3] plot (\x, {{1+2/((\x)+1)}}) ;
\node[right] at (1.1,2.2) {\small $p=p_G(n+\rho(v_\infty))$};
\node[left] at (1.9,2.7) {$T_\vep \le L_S$};
\node[left] at (1.2,1.8) {\tiny $T_\vep \le L_S$};
\node[below left] at (0,1) {$(0,1)$};

\end{tikzpicture}
\caption{Corollary \ref{thm-VanDamp}: lifespan estimates for $n=2$}
\label{2D-bu}
\end{figure}

%\begin{figure*}%%\sidecaption
 %\begin{minipage}{4cm}
%\centering
 % \includegraphics[width=1\textwidth]{1.png}
  %%\subcaption{fig1}
  %\end{minipage}
 % \begin{minipage}{4cm}
  %\centering
  %\includegraphics[width=1\textwidth]{2.png}
  %%\subcaption{fig2}
  %\end{minipage}
  %\begin{minipage}{4cm}
  %\centering
  %\includegraphics[width=1\textwidth]{3.png}
  %%\subcaption{fig2}
  %\end{minipage}
%\caption{}
%\end{figure*}

%\marginpar{\color{blue}strategy and structure}

For the strategy of proof, we basically follow the test function method.
 The key ingredient is to construct
special positive standing wave solutions, of the form $w=e^{-\la t}\phi_\la(x)$, to the linear dual problem,
$$(\pt^2-\Delta-D\pt +V) w=0\ ,$$
with the desired asymptotic behavior. In turn, it is reduced to constructing solutions to certain elliptic ``eigenvalue" problems:
\beeq\label{eq-eigen} (\la^2-\Delta+D\la +V) \phi_\la=0\ .\eneq
Concerning the subcritical blow up results, $p<p_c$, it suffices to construct solutions for \eqref{eq-eigen} with $\la=0$ and some $\la_0>0$. While for the critical case, we will also need to find solutions with uniform estimates with respect to all $\la\in (0,\la_0)$.

\subsubsection*{Outline} Our paper is organized as follows. In Section \ref{sec:1stTest},
we present
the existence results of special solutions for the elliptic ``eigenvalue" problems \eqref{eq-eigen}, with certain asymptotic behavior,
 by applying elliptic and ODE theory.
 These solutions play a key role in constructing the test functions and the proof of blow up results.
 The proof of the existence results are given in Section
 \ref{sec-pf-test}.
Equipped with the eigenfunctions, the test function method is implemented in Section \ref{sec:AE3}, to give the proof of Theorem \ref{general}.
Then, in Section \ref{sec:regular}, we give the proof of Theorem \ref{regular}, when $n\ge 2$ and the potential is of short range. In essence, with the help of
Lemma \ref{lem8} and \ref{lem2}, Theorem \ref{regular} could be viewed as a corollary of  Theorem \ref{general} when $n\ge 3$.
%The two dimensional case is a little different
At last, in Section \ref{sec:thmStrauss}, we
present the required test function for the critical case, as well as the upper bound and lower bound estimates. Equipped with the test function, a relatively routine argument (see, e.g.,
 \cite{LT20} or \cite{LLWW}) will yield Theorem \ref{thmStrauss}.

\subsubsection*{Notations.}
We close this section by listing the notation. Let $\langle x\rangle = \sqrt{1 + |x|^{2}}$ for $x\in \R^n$.
We will also use $A\les B$ to stand for $A\le C B$ where the constant $C$ may change from line to line.

\section{Solutions to elliptic equations}\label{sec:1stTest}
In this section, we present the existence results of special solutions for some elliptic ``eigenvalue" problems, with certain asymptotic behavior, which would be used to construct test functions to derive the expected lifespan estimates.
We will consider two types of elliptic equations.
%Let $D(x)=D(|x|)$, $V(x)=V(|x|)$ be two spherically symmetric functions, then we have the following.

\subsection{Eigenfunction for $V-\Delta$}
At first, we consider the ``zero-eigenfunction" for the positive elliptic operator $V-\Delta$.
\begin{lem}
\label{lem1}

Suppose $0\leq V\in C(\R^n\backslash\{0\})$ and
$$
 r^2 V(r)=v_\infty+ r V_\infty(r)
 =v_0+rV_0(r)
 \ ,  V_\infty(r)\in L^1_{r>1}\ , V_0(r)\in L^1_{r<1}\ ,$$
Then if $n\geq 3$ or $v_0, v_\infty>0$, there exists a solution $\phi_0\in H_{loc}^1(\R^n)\cap W_{loc}^{2, 1+}(\R^n)$ of
\beeq
\label{s31}
\Delta \phi_0=V\phi_0, \ x\in\R^n,\  \ n\geq 2,\
\eneq
 satisfying
\beeq
\label{lem1-1}
\begin{split}
\phi_0\simeq
\begin{cases}
r^{\rho(v_0)},\ r\leq 1,\\
r^{\rho(v_\infty)},\ r\geq 1\ .
\end{cases}
\end{split}
\eneq
In addition, for $n=2$ with $v_0=0$, if we assume $r^2V$ is analytic in $(0,\de)$ for some $\de>0$, i.e., $$
r^2V(r)=\sum^{\infty}_{j=1}b_j r^{j}, \  0<r<\de\ ,  $$
then the same result holds.
\end{lem}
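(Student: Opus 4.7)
The plan is to reduce the PDE \eqref{s31} to the radial ODE $(r^{n-1}\phi_0')' = r^{n-1}V(r)\phi_0$ and analyse its behaviour near $r=0$ and $r=\infty$ separately via the substitution $\phi_0 = r^{\rho}\psi$ that cancels the Hardy-type Euler leading term $v/r^{2}$ of the potential. Near the origin, I set $\phi_0 = r^{\rho(v_0)}\psi_0$; the indicial identity $\rho_0(\rho_0+n-2) = v_0$ reduces the equation to $(r^{2\rho_0+n-1}\psi_0')' = r^{2\rho_0+n-2}V_0(r)\psi_0$. Under the hypothesis $n\ge 3$ or $v_0>0$, the exponent $\beta := 2\rho_0+n-2$ is strictly positive; a double integration from $r=0$ and a switch of the order of integration then produce the bounded-kernel Volterra equation
\[ \psi_0(r) = 1 + \tfrac{1}{\beta}\int_0^r V_0(s)\psi_0(s)\bigl(1 - (s/r)^{\beta}\bigr)\, ds , \]
whose contraction on a small interval $(0,r_*)$ yields a unique positive $\psi_0$ with $\psi_0(0^+) = 1$, hence $\phi_0 \simeq r^{\rho(v_0)}$ near the origin. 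The borderline case $n = 2$, $v_0 = 0$, in which both indicial exponents vanish and the second solution acquires a $\log r$, is covered by the analyticity hypothesis $\mathbf{3'}$, reducing matters to a convergent Frobenius series.

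I then continue $\phi_0$ by the ODE to all of $(0,\infty)$ and prove global positivity by a monotonicity argument exploiting $V\ge 0$: with $w(r):=r^{n-1}\phi_0'(r)$, the identity $w'(r) = r^{n-1}V(r)\phi_0(r) \ge 0$ together with $w(0^+) = 0$ (from $\beta > 0$) forces $w \ge 0$ on $(0,\infty)$, so $\phi_0$ is nondecreasing and never vanishes. The required regularity $\phi_0 \in H^{1}_{loc} \cap W^{2,1+}_{loc}$ near the origin is a direct integrability check on the radial weights $r^{2\rho_0}$, $r^{2\rho_0 - 2}$, $r^{(\rho_0-2)p}$ in polar coordinates, all safeguarded by $\beta>0$.

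For the behaviour at infinity, the analogous substitution $\phi_0 = r^{\rho(v_\infty)}\psi_\infty$ recasts the equation as $(r^\mu \psi_\infty')' = r^{\mu-1}V_\infty(r)\psi_\infty$ with $\mu := 2\rho_\infty + n - 1 \ge 1$; since $V_\infty \in L^{1}_{r>1}$, I pass to the first-order $2\times 2$ system for $(\psi_\infty, r^\mu \psi_\infty')$ and apply Levinson-type asymptotic integration against the unperturbed homogeneous pair $\{1, r^{1-\mu}\}$ to obtain a fundamental system $\psi^{+}_\infty = 1 + o(1)$ and $\psi^{-}_\infty = r^{1-\mu}(1 + o(1))$. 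Decomposing our globally positive solution as $\phi_0 = A\,r^{\rho(v_\infty)}(1+o(1)) + B\,r^{-(n-2)-\rho(v_\infty)}(1+o(1))$, the monotonicity and positivity from the previous step preclude $A = 0$ (which would force $\phi_0$ to tend to zero, contrary to its being positive and nondecreasing), so $A > 0$ and $\phi_0 \simeq r^{\rho(v_\infty)}$ for $r\ge 1$. The main technical hurdle is precisely this asymptotic-integration step: a naive variation-of-parameters bookkeeping produces integrals of the form $\int r^{2\rho_\infty+n-2}V_\infty$ which need not be in $L^{1}$, and the remedy is to pass to the first-order system, where only $V_\infty$ itself appears in the off-diagonal coupling of the diagonalised problem, letting Gronwall close the argument; the other delicate point, the $n=2$ degeneracy, is exactly what the analyticity hypothesis $\mathbf{3'}$ absorbs.
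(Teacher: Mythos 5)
Your argument is correct and reaches the same conclusion, but it travels a different road than the paper. The paper reduces the radial ODE to a first-order system via the logarithmic change of variable $r=e^{\mp t}$; the resulting system is of the form $Y'=(A+B(t))Y$ with $A$ constant, $B\in L^1$, and Levinson's theorem (Coddington--Levinson, Ch.~3, Thm.~8.1) directly delivers the two fundamental solutions with asymptotics $e^{\la_{\pm}t}$, whence $\phi_0\sim r^{\rho(v_0)}$ as $r\to 0$ and a two-term expansion at infinity. You instead peel off the Euler-type factor $r^{\rho}$ and convert the residual equation into a Volterra integral equation with bounded kernel $\frac1\beta\bigl(1-(s/r)^{\beta}\bigr)V_0(s)$, solved by contraction; this is more elementary and self-contained, at the price of having to re-derive the $W^{2,1+}_{loc}\cap H^1_{loc}$ membership and the derivative sign by hand (which you do). At infinity your description of the step as ``Levinson-type'' is imprecise---the first-order system in $(\psi_\infty,\,r^{\mu}\psi_\infty')$ is not of the ``constant plus $L^1$'' form Levinson requires---but the intended content (double integration, interchange of order, $(1-(s/r)^{\mu-1})V_\infty(s)$ kernel bounded by $|V_\infty|\in L^1$, Gronwall) is sound and is in fact the same Volterra device you used at the origin, so the conclusion holds. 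The monotonicity/positivity bootstrap via $w=r^{n-1}\phi_0'$, $w'=r^{n-1}V\phi_0\ge 0$, $w(0^+)=0$ is identical in spirit to the paper's; both of you leave the mild circularity (``$w'\ge0$ needs $\phi_0\ge0$'') to the standard first-zero argument without stating it, which is fine. Your closing remark that the analyticity hypothesis absorbs the $n=2$ degeneracy is accurate only for the origin ($v_0=0$); at infinity the degeneracy $\mu=1$ is excluded directly by the hypothesis $v_\infty>0$ for $n=2$, not by $\mathbf{3'}$.
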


When $V$ is H\"older continuous near the origin, the regularity of the solutions could be improved.
%In the following, we show that there exist $C^2$ solutions of \eqref{s31}
\begin{lem}
\label{lem8}
Suppose $0\leq V\in C(\R^n)$,
$V=V(r)=\mathcal{O}(\langle r\rangle^{-\be})$ for some $\be>2$,
 and $V\in C^\de(B_\de)$ for some $\de>0$.  In addition, we assume $V$ is nontrivial when $n=2$. Then there exists a $C^2$ solution of \eqref{s31} satisfying
\beeq
\begin{split}
\phi_0\simeq
\begin{cases}
\ln(r+2),\  n=2,\\
1,\  \ n\geq 3.\
\end{cases}
\end{split}
 \eneq
Moreover, when $n=2$ and $V\equiv 0$, it is clear that $\phi_0=1$ is a solution
\eqref{s31}.
\end{lem}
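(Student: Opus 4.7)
My plan is to construct $\phi_0$ as a limit of solutions to Dirichlet problems on balls $B_k$, and then to identify the asymptotic behavior by exploiting the decay $V=\mathcal{O}(\langle r\rangle^{-\beta})$, $\beta>2$. For each large $k$, consider
\[
-\Delta \phi_k + V\phi_k = 0 \text{ in } B_k, \qquad \phi_k|_{\partial B_k} = h_k,
\]
with $h_k=1$ when $n\ge 3$ and $h_k=\ln(k+2)$ when $n=2$. Since $V\ge 0$, the weak maximum principle for $-\Delta+V$ yields $0\le \phi_k\le h_k$ and uniqueness; the radial symmetry of $V$ transfers to $\phi_k$. Under $V\in C(\R^n)\cap C^\delta(B_\delta)$, interior Schauder and $W^{2,p}$ estimates give uniform $C^2$ bounds on compact sets, so along a subsequence $\phi_k\to\phi_0$ in $C^2_{loc}$, producing a nonnegative radial $C^2$ solution of $\Delta\phi_0=V\phi_0$ on $\R^n$.

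For $n\ge 3$, I would pin down $\phi_0\simeq 1$ by writing $\phi_k=h_k-w_k$, where $w_k\ge 0$ is the Dirichlet Newton potential of $V\phi_k$ on $B_k$. The pointwise bound
\[
0\le w_k(x)\le c_n\int_{\R^n}|x-y|^{2-n}V(y)\phi_k(y) dy
\]
together with $\phi_k\le 1$ and the standard splitting $\{|y|\le |x|/2\}\cup\{|y|\ge |x|/2\}$ shows the right-hand side is $\mathcal{O}(|x|^{2-\beta})$ (using only $\beta>2$), tending to $0$ uniformly in $k$. Hence $\phi_0=1-w$ with $w(x)\to 0$ at infinity, and the strong maximum principle applied to the subharmonic $\phi_0$ (since $\Delta\phi_0=V\phi_0\ge 0$) gives strict positivity on $\R^n$.

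For $n=2$, radial symmetry reduces the equation to $(r\phi_k')'=rV(r)\phi_k$. Integrating from zero (using $\phi_k'(0)=0$) gives $r\phi_k'(r)=\int_0^r sV(s)\phi_k(s)ds$, nondecreasing and bounded by $\ln(k+2)\int_0^\infty sV(s)ds<\infty$ (finite thanks to $\beta>2$). Switching the order of integration in the resulting double integral, the boundary condition $\phi_k(k)=\ln(k+2)$ forces the coefficient of the logarithm to converge to $1$ as $k\to\infty$, and nontriviality of $V$ prevents the degenerate case where that coefficient vanishes. Passing to the limit yields $\phi_0\simeq\ln(r+2)$, with positivity again from the strong maximum principle. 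The case $V\equiv 0$ in $n=2$ is trivial with $\phi_0\equiv 1$.

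I expect the main obstacle to be the two-sided matching in dimension two: showing quantitatively that $\phi_k(r)/\ln(r+2)$ stays bounded away from $0$ and from $\infty$ uniformly in $k$ requires delicate bookkeeping of the ODE representation, whereas for $n\ge 3$ the Newton potential bound supplies both sides directly. The decay hypothesis $\beta>2$ is essential throughout: it makes $sV(s)$ integrable on $\R_+$ in the two-dimensional ODE argument and ensures decay of the Newton potential of $V$ in higher dimensions.
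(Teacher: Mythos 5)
Your construction differs from the paper's: you exhaust $\R^n$ by Dirichlet problems on $B_k$ with prescribed boundary growth and take a limit, whereas the paper solves a Dirichlet problem only on a small ball $B_\delta$ (where $V$ is H\"older) and then continues the solution outward as a radial ODE, extracting the asymptotics directly from $\partial_r(r^{n-1}\partial_r\phi_0)=r^{n-1}V\phi_0$. For $n\ge 3$ your Newton-potential argument is a valid alternative and arguably cleaner than the paper's Gronwall estimate; the only imprecisions are cosmetic (the potential is $\mathcal{O}(|x|^{2-\min(\beta,n)})$, not $\mathcal{O}(|x|^{2-\beta})$, when $\beta>n$, and the uniform $C^2_{loc}$ convergence should really go through $W^{2,p}_{loc}\hookrightarrow C^{1,\alpha}_{loc}$ and then the radial ODE, since $V$ is merely continuous outside $B_\delta$).

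The $n=2$ case, however, has a genuine gap, and it is more than ``delicate bookkeeping.'' From $\phi_k(r)=\phi_k(0)+\int_0^r sV(s)\phi_k(s)\ln(r/s)\,ds$, the boundary condition yields
\begin{equation*}
\ln(k+2)=\phi_k(0)+\ln k\int_0^k sV(s)\phi_k(s)\,ds-\int_0^k sV(s)\phi_k(s)\ln s\,ds,
\end{equation*}
but the only uniform-in-$k$ bound you have at this stage is $0\le\phi_k\le\ln(k+2)$. Both integrals are then merely $\mathcal{O}(\ln k)$, so nothing forces $\int_0^k sV\phi_k\,ds\to 1$, nor does it rule out the degenerate scenario $\phi_k\to 0$ in $C^0_{loc}$ (in which case $\phi_0\equiv 0$). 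The argument is circular: to control the coefficient of $\ln k$ you need $\phi_k(s)\lesssim\ln(s+2)$ uniformly in $k$, which is precisely the upper bound you are trying to prove. The paper breaks this circle by a bootstrap on its single global solution: the inequality $r\partial_r\phi_0\le\phi_0\int_0^\infty sV\,ds$ first yields $\phi_0\lesssim 1+r$, and then feeding improved bounds $\phi_0\lesssim(1+r)^{k}$ back into $r\partial_r\phi_0=\int_0^r sV\phi_0\,ds$ finitely many times gives $\phi_0\lesssim\ln(2+r)$; the nontriviality of $V$, together with $\phi_0(R)>0$ from the maximum principle on $B_\delta$, supplies the matching lower bound $\phi_0\gtrsim\ln(2+r)$. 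To complete your approach you would need to run an analogous uniform-in-$k$ bootstrap for the $\phi_k$ and also establish a uniform positive lower bound $\phi_k(R)\ge c>0$ before passing to the limit; as written, neither is available.
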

%\begin{rem}
When $n\geq 3$, $V$ is locally H\"{o}lder continuous (not necessarily radial) and $0\leq V\leq \frac{C}{1+|x|^{2+\de}}$ with $C,\de>0$, Lemma \ref{lem8} was known from Yordanov-Zhang \cite{YZ05}.
%Here we give a simple proof when $V$ is radial for $C^2$ solutions.
%\end{rem}

\subsection{``Eigenfunction" for $V+D-\Delta$ with negative ``eigenvalue"}
In this subsection, we consider the
``eigenfunction" for $V+D-\Delta$ with negative ``eigenvalue":
\beeq
\label{s32}
\Delta\phi_{\la_0}=(\la_0^2+\la_0 D+V)\phi_{\la_0}, \ \la_0>0,\  \ x\in\R^n.\
\eneq

\begin{lem}
\label{lem4}
Let $V=V(r),  D=D(r)\in C(0, \infty)$. Suppose $V\ge 0$, $D\ge -\la_0-\frac{V}{\la_0}$ for some $\la_0>0$,  and
$$
D(r)=\frac{d_0}{r^2}+\frac{1}{r}D_0(r),\  V(r)=\frac{v_0}{r^2}+\frac{1}{r}V_0(r),\  r\leq 1,\ $$
with $D_0(r)\in L^1_{r<1}$, $V_0(r)\in L^1_{r<1}$.
 In addition, we assume for some $d_\infty\in \R$,
$$V(r)\in L^{1}_{r>1},\ D(r)=\frac{d_\infty}{r}+D_\infty(r),\  D_\infty(r)\in L^{1}_{r>1}.\ $$
 Then if $n\geq 3$ or $v_0+\la_0d_0>0$, there exists a $H_{loc}^1(\R^n)\cap W_{loc}^{2, 1+}(\R^n)$ solution of \eqref{s32} satisfying
\beeq
\label{lem1-2}
\begin{split}
\phi_{\la_0}\sim
\begin{cases}
r^{\rho(v_0+\la_0d_0)},\ r\leq 1,\\
r^{-\frac{n-1-d_\infty}{2}}e^{\la_0 r},\ r\geq 1.
\end{cases}
\end{split}
\eneq
In addition, we have the same result for $n=2$ with $v_0+\la_0d_0=0$, when $r^2(V+\la_0 D)$ is analytic near $0$:
$$r^2(V+\la_0D)=\sum^{\infty}_{j=1}c_j r^j\ , 0<r<\de\ .$$
\end{lem}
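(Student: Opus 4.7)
The plan is to reduce to the radial ODE
$$\phi'' + \frac{n-1}{r}\phi' = W(r)\phi, \qquad W := \la_0^2 + \la_0 D + V,$$
and exploit the hypothesis $D \ge -\la_0 - V/\la_0$, which gives $W \ge 0$. Near $r=0$ the leading singular part of $W$ is $(v_0 + \la_0 d_0)/r^2$, so the indicial equation is exactly the one from Lemma \ref{lem1} with $v_0$ replaced by $v_0 + \la_0 d_0$, and the larger indicial root is $\rho_+ = \rho(v_0 + \la_0 d_0)$. The sub-leading term $(V_0 + \la_0 D_0)/r$ still lies in $L^1_{r<1}$, so the Frobenius/fixed-point argument used for Lemma \ref{lem1} (together with the analyticity assumption handling the $n=2$, $v_0 + \la_0 d_0 = 0$ endpoint) transfers verbatim and furnishes a positive local $H^1_{loc}\cap W^{2,1+}_{loc}$ solution with $\phi_{\la_0}(r) \simeq r^{\rho_+}$ as $r\to 0$.

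For the global extension and the asymptotic at infinity, I would rewrite the ODE in divergence form $(r^{n-1}\phi_{\la_0}')' = r^{n-1}W(r)\phi_{\la_0}$. Since $W\ge 0$ and $r^{n-1}\phi_{\la_0}'\to 0^+$ as $r\to 0$ (from the $r^{\rho_+}$ asymptotic), $r^{n-1}\phi_{\la_0}'$ is nondecreasing and nonnegative so long as $\phi_{\la_0}>0$; this extends the local solution to a strictly positive, monotone nondecreasing function on $(0,\infty)$. To extract the behavior at infinity, apply the Liouville reduction $u := r^{(n-1)/2}\phi_{\la_0}$ to eliminate the drift, yielding
$$u'' = \Bigl[\la_0^2 + \frac{\la_0 d_\infty}{r} + \la_0 D_\infty(r) + V(r) + \frac{(n-1)(n-3)}{4 r^2}\Bigr] u.$$
Classical Liouville--Green/WKB analysis --- conjugating by the ansatz $e^{\pm\la_0 r} r^{\pm d_\infty/2}$ so that the integrating factor $e^{2\la_0 r} r^{d_\infty}$ absorbs the sub-leading $\la_0 d_\infty/r$ term, and then solving a Volterra equation by contraction on $[R,\infty)$ using $\la_0 D_\infty + V \in L^1_{r>1}$ --- produces two linearly independent solutions $u_\pm(r) = r^{\pm d_\infty/2} e^{\pm\la_0 r}(1+o(1))$ as $r\to\infty$. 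Writing $\phi_{\la_0} = r^{-(n-1)/2}(A u_+ + B u_-)$, positivity and monotonicity force $A>0$: $A<0$ contradicts positivity (since $A u_+$ would dominate with negative sign for large $r$), while $A=0$ makes $\phi_{\la_0}$ decay to zero, contradicting the monotone bound $\phi_{\la_0}(r)\ge \phi_{\la_0}(1)>0$. Hence $\phi_{\la_0}(r)\sim A r^{-(n-1-d_\infty)/2} e^{\la_0 r}$ as $r\to\infty$, as claimed.

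The main obstacle is running the WKB step cleanly: the correction exponent $\pm d_\infty/2$ comes from the sub-leading $\la_0 d_\infty/r$ term rather than from the leading $\la_0^2$, and a naive Picard iteration for $\chi$ in $\phi = r^{-(n-1)/2}e^{\la_0 r}\chi$ would generate a logarithmic secular term. The remedy is to build $r^{\pm d_\infty/2}$ into the ansatz from the start, so the remaining forcing in the Volterra equation is the genuinely integrable combination $\la_0 D_\infty + V + O(r^{-2})$ against the integrating-factor kernel. The near-zero Frobenius analysis and the global $W\ge 0$ monotonicity argument are direct, and should require little new effort beyond what already appears in the proof of Lemma \ref{lem1}.
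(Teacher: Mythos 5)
Your proposal is correct and follows essentially the same route as the paper: Frobenius/Levinson analysis with shifted indicial roots $\rho(v_0+\la_0 d_0)$ near the origin, monotonicity from the divergence form $(r^{n-1}\phi')' = r^{n-1}W\phi$ with $W\ge 0$ to propagate positivity, and the Liouville reduction $y = r^{(n-1)/2}\phi_{\la_0}$ at infinity. The only packaging difference is at the last step: the paper invokes a pre-established ODE lemma (Lemma~\ref{thm-ode}, from Liu--Wang) which absorbs the sub-leading $\la_0 d_\infty/r$ term into the effective wavenumber $K(r)=\sqrt{1+d_\infty/(\la_0 r)}$ and delivers the two-sided bound $y\simeq e^{\la_0\int K}$ directly for positive increasing solutions, whereas you carry out the equivalent Liouville--Green/Volterra analysis by hand with $r^{\pm d_\infty/2}$ built into the ansatz and then rule out the decaying branch by monotonicity.
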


\begin{lem}
\label{lem2}
Let $V=V(r),  D=D(r)\in C(\R^n)\cap C^\de(B_\de)$ for some $\de>0$. Suppose $V\ge 0$, $D\geq -\la_0-\frac{V}{\la_0}$,
and also that for some $d_\infty\in \R$, $R>1$ and $r\geq R$,
we have $$V(r)\in L^{1}([R, \infty))\ ,\ D(r)=\frac{d_\infty}{r}+D_\infty(r), \ D_\infty(r)\in L^{1}([R, \infty))\ .$$
 Then there exists a $C^2$ solution of \eqref{s32} satisfying
\beeq
\phi_{\la_0}\simeq
\langle r\rangle^{-\frac{n-1-d_\infty}{2}}e^{\la_0r}\ .
\eneq
\end{lem}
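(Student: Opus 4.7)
The approach parallels that of Lemma~\ref{lem4}, with the origin analysis considerably simplified by the Hölder regularity of $V$ and $D$ at $0$. First I would look for a radial solution $\phi_{\la_0}(r)$ of the reduced ODE
\[
(r^{n-1}\phi')' = r^{n-1}q(r)\phi,\qquad q(r):=\la_0^2+\la_0 D(r)+V(r)\ge 0,
\]
where nonnegativity of $q$ follows from the hypothesis $D\ge -\la_0-V/\la_0$. Since $V,D\in C(\R^n)\cap C^{\de}(B_\de)$, standard local ODE theory (together with elliptic regularity on $B_\de$ for the radial Laplacian) provides a unique $C^2(\R^n)$ solution with $\phi_{\la_0}(0)=1$ and $\phi_{\la_0}'(0)=0$. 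Integrating $(r^{n-1}\phi_{\la_0}')'=r^{n-1}q\phi_{\la_0}$ from $0$ and using $q\ge 0$ then shows $\phi_{\la_0}'\ge 0$, so $\phi_{\la_0}\ge 1$ on $[0,\infty)$.

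The main step is the asymptotic analysis at infinity. Motivated by the target, I would substitute
\[
\phi_{\la_0}(r)=r^{-\frac{n-1-d_\infty}{2}}e^{\la_0 r}w(r),
\]
which transforms the ODE into
\[
w''+\Big(2\la_0+\frac{d_\infty}{r}\Big)w'=E(r)\,w,\qquad E(r):=\la_0 D_\infty(r)+V(r)+\frac{(n-1-d_\infty)(n-3+d_\infty)}{4 r^2}.
\]
Since $V,D_\infty\in L^1([R,\infty))$ and $r^{-2}\in L^1([R,\infty))$, the perturbation $E$ lies in $L^1([R,\infty))$. The unperturbed equation admits the two independent fundamental solutions $w_+\equiv 1$ and $w_-(r)=\int_r^\infty s^{-d_\infty}e^{-2\la_0 s}\,ds\sim r^{-d_\infty}e^{-2\la_0 r}/(2\la_0)$. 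Using $r^{d_\infty}e^{2\la_0 r}$ as an integrating factor for the $w'$ equation and applying Duhamel's principle from $r$ to $\infty$, the equation is recast as the Volterra integral equation
\[
w(r)=1+\int_r^\infty K(r,s)\,E(s)\,w(s)\,ds,
\]
with kernel bounded on $\{R_0\le r\le s\}$ for $R_0$ large. A standard contraction mapping argument in $L^\infty([R_0,\infty))$, exploiting $\|E\|_{L^1([R_0,\infty))}\to 0$ as $R_0\to\infty$, produces a unique solution $w_+(r)\to 1$, and hence a fundamental solution $\phi_+$ of \eqref{s32} with asymptotics $r^{-(n-1-d_\infty)/2}e^{\la_0 r}$; a parallel construction (matched to $w_-$) yields a linearly independent decaying fundamental solution $\phi_-$.

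To conclude, I would decompose $\phi_{\la_0}=A\phi_++B\phi_-$ on $[R_0,\infty)$. Since $\phi_-(r)\to 0$ while $\phi_+(r)\to\infty$, the global lower bound $\phi_{\la_0}\ge 1$ forces $A>0$; hence $\phi_{\la_0}(r)\simeq r^{-(n-1-d_\infty)/2}e^{\la_0 r}$ on $[R_0,\infty)$, and the strict positivity and continuity of $\phi_{\la_0}$ on the compact set $[0,R_0]$ extend this to the claimed equivalence $\phi_{\la_0}\simeq\langle r\rangle^{-(n-1-d_\infty)/2}e^{\la_0 r}$ on all of $\R^n$. I expect the Volterra fixed-point step to be the main technical obstacle: constructing the kernel $K(r,s)$ compatible with the integrating factor and verifying that the $L^1$ integrability of $V$ and $D_\infty$ is precisely what makes the Volterra operator contractive on an $L^\infty$ ball near the constant function $1$. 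By contrast, the behavior near the origin is comparatively routine thanks to the Hölder hypothesis.
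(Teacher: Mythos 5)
Your plan for the origin agrees with the paper's (elliptic regularity/Dirichlet problem on $B_\de$ via Gilbarg--Trudinger, then monotonicity of $\phi_{\la_0}$ from the sign of $q$), and your substitution $\phi_{\la_0}=r^{-(n-1-d_\infty)/2}e^{\la_0 r}w(r)$ with $E=\la_0 D_\infty+V+(n-1-d_\infty)(n-3+d_\infty)/(4r^2)\in L^1$ is correct. The paper instead passes to the normal form $y=r^{(n-1)/2}\phi_{\la_0}$ and simply cites the pre-packaged ODE asymptotics of Lemma~\ref{thm-ode} (Liu--Wang, Lemma 3.1 in \cite{LW2019}), which buys brevity; your plan is more self-contained.

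However, the Volterra step as written has a genuine gap. With the integrating factor $\mu(r)=r^{d_\infty}e^{2\la_0 r}$, "Duhamel from $r$ to $\infty$'' produces the kernel $K(r,s)=\mu(s)\int_r^{s}\mu(\tau)^{-1}d\tau$, and for $s\gg r$ one has $K(r,s)\sim \frac{1}{2\la_0}\bigl(\tfrac{s}{r}\bigr)^{d_\infty}e^{2\la_0(s-r)}$, which is \emph{not} bounded on $\{R_0\le r\le s\}$; for $E$ merely $L^1$ the integral $\int_r^\infty K(r,s)E(s)w(s)\,ds$ diverges, so the proposed fixed-point equation does not make sense. Moreover, for the solution with $w\to 1$ one cannot discard the boundary term $\mu(s)w'(s)$ at $s=\infty$: although $w'\to 0$, $\mu\to\infty$ fast enough that $\mu w'$ is in general unbounded. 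The correct variation-of-constants setup (as in Levinson's theorem, or equivalently what Lemma~\ref{thm-ode} encapsulates) must split the two fundamental solutions and use mixed limits: integrate from $r$ to $\infty$ against the decaying solution $w_-(s)$ (giving a bounded, tail-small contribution $\simeq\int_r^\infty E$), and integrate from a fixed base point $R_0$ to $r$ against $w_+\equiv 1$ weighted by $w_-(r)$; only then is the resulting operator a contraction for $R_0$ large. With that fix, your argument for $A>0$ and the final equivalence goes through. A minor secondary point: prescribing $\phi_{\la_0}(0)=1$, $\phi_{\la_0}'(0)=0$ by "standard local ODE theory'' is not quite legitimate, since $r=0$ is a singular point of the radial ODE --- the elliptic Dirichlet problem on $B_\de$ that you also invoke (and that the paper uses) is what actually supplies the $C^2$ solution and the monotonicity via the maximum principle.
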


\subsection{Eigenvalue problem with parameters}
To handle the critical problem, we will need
to construct a class of (unbounded) positive solutions for the eigenvalue problem, with certain uniform  estimates for small parameters.

\begin{lem}
\label{elp}
Let $n\geq 2$, $\be>1$, $\mu\geq 0$. Suppose $D(x)\in C(\R^n)\cap C^{\de}(B_\de)$ for some $\de>0$ and $0\leq D(x)\leq \frac{\mu}{(1+|x|)^{\be}}$.
Then there exists $c_1\in (0, 1)$ such that for any
$0<\la\le 1$, there is a $C^2$ solution of
\beeq
\label{509}
\Delta\phi_\la-\la D(x)\phi_\la=\lambda^{2}\phi_\la
\eneq
satisfying
\beeq
\label{510}
c_{1} \< \la |x|\>^{-\frac{n-1}{2}}e^{\la |x|}%\< \la r\>^{-\frac{n-1}{2}}e^{\la \int^{r}_{0}K(\tau)d\tau}
<\phi_\la(x) < c_1^{-1}\< \la |x|\>^{-\frac{n-1}{2}}e^{\la |x|} \ .
\eneq
\end{lem}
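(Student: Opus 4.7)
The plan is to treat \eqref{509} as a perturbation of the free problem $(\Delta-\la^2)\phi=0$, whose canonical positive eigenfunction is the Yordanov--Zhang function $\phi_\la^0(x):=\int_{\Sp^{n-1}}e^{\la x\cdot\om}\,d\om$. A standard stationary phase / modified Bessel analysis yields the uniform two-sided bound
\[
c_0\<\la|x|\>^{-(n-1)/2}e^{\la|x|}\le\phi_\la^0(x)\le c_0^{-1}\<\la|x|\>^{-(n-1)/2}e^{\la|x|}
\]
for all $x\in\R^n$ and $\la\in(0,1]$, and the identity $(-\Delta+\la^2+\la D)\phi_\la^0=\la D\phi_\la^0\ge 0$ immediately presents $\phi_\la^0$ as a global positive super-solution of the perturbed operator.

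I would first obtain $\phi_\la$ and the upper bound in \eqref{510} by Perron exhaustion. For each $R>0$ solve the Dirichlet problem $(-\Delta+\la^2+\la D)\phi_{\la,R}=0$ in $B_R$ with $\phi_{\la,R}=\phi_\la^0$ on $\pa B_R$; unique solvability is immediate because $\la^2+\la D>0$, and the strong maximum principle together with super-solution comparison yields $0<\phi_{\la,R}\le\phi_\la^0$ in $B_R$. A second comparison shows $R\mapsto\phi_{\la,R}$ is monotone decreasing, so the pointwise limit $\phi_\la:=\lim_{R\to\infty}\phi_{\la,R}$ solves \eqref{509} on $\R^n$ and inherits $\phi_\la\le\phi_\la^0\le c_0^{-1}\<\la|x|\>^{-(n-1)/2}e^{\la|x|}$. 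Interior Schauder estimates, exploiting $D\in C^\de(B_\de)\cap C(\R^n)$, upgrade $\phi_\la$ to a classical $C^2$ solution.

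The matching lower bound is the delicate step, and I would reduce it to a radial comparison. Setting $D^*(r):=\mu(1+r)^{-\be}\ge D(x)$, any positive radial solution $\phi_\la^*$ of $(-\Delta+\la^2+\la D^*)\phi_\la^*=0$ satisfies $(-\Delta+\la^2+\la D)\phi_\la^*=\la(D-D^*)\phi_\la^*\le 0$, hence is a sub-solution for the $D$-problem. Maximum-principle comparison with $\phi_{\la,R}$ on $B_R$, after normalizing so that $\phi_\la^*\le\phi_\la^0$ on each $\pa B_R$, forces $\phi_\la\ge c_*\phi_\la^*$ in the $R\to\infty$ limit. The construction of $\phi_\la^*$ is then a radial ODE problem: via the substitution $\phi_\la^*=r^{-(n-2)/2}w(r)$ one obtains
\[
w''+\tfrac{1}{r}w'-\Bigl(\tfrac{(n-2)^2}{4r^2}+\la^2+\la D^*(r)\Bigr)w=0,
\]
which I would solve by a Volterra iteration anchored on the exponentially growing free solution $I_{(n-2)/2}(\la r)$. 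The hypothesis $\be>1$ makes $\int_0^\infty \la D^*(r)\,dr$ uniformly bounded in $\la$, so the correction is a small perturbation of the free solution in an appropriate weighted norm and the asymptotic $\phi_\la^*\simeq\<\la r\>^{-(n-1)/2}e^{\la r}$ is preserved uniformly in $\la\in(0,1]$.

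The principal obstacle is controlling the Volterra iteration uniformly across the regime transition: $I_{(n-2)/2}(\la r)$ grows polynomially for $\la r\ll 1$ and exponentially for $\la r\gg 1$, so the weighted norm in the iteration must track this change of behavior, and the finite integral of $\la D^*$ guaranteed by $\be>1$ has to translate into a uniform-in-$\la$ bound on the iterated perturbation. Once this is arranged, the global upper and lower bounds assemble into the single constant $c_1$ claimed in \eqref{510}.
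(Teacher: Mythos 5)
Your construction of the solution and the matching \emph{upper} bound are clean and genuinely different from the paper: using the Yordanov--Zhang function $\phi_\la^0(x)=\int_{\Sp^{n-1}}e^{\la x\cdot\om}\,d\om=\phi_1^0(\la x)$ as a global supersolution (since $D\ge 0$), together with monotone exhaustion of Dirichlet problems, is a tidy route and the uniform two-sided asymptotics of $\phi_\la^0$ are immediate from those of $\phi_1^0$. The paper instead normalizes the Dirichlet solution to equal $1$ on $\partial B_{1/\la}$, gets $0<\phi_\la\le 1$ and radial monotonicity inside, and then solves the radial ODE outward from $r=1/\la$.

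The gap is in your lower bound, and it is exactly the heart of the lemma. You reduce it to constructing a radial $\phi_\la^*$ for the majorant $D^*=\mu(1+r)^{-\be}$ with $\phi_\la^*\simeq\langle\la r\rangle^{-(n-1)/2}e^{\la r}$ \emph{uniformly in $\la\in(0,1]$}, but this is just a radial instance of the same uniformity statement you are trying to prove, and your proposed Volterra iteration anchored on $I_{(n-2)/2}(\la r)$ does not deliver it as written. First, $\int_0^\infty\la D^*\,dr=\la\mu/(\be-1)$ is not the operative smallness quantity; the Green's-function weight for the Bessel operator is comparable to $s\,I_\nu(\la s)K_\nu(\la s)\sim\min(s,1/\la)$, so the relevant integral $\int_0^\infty\min(s,1/\la)\,\la D^*(s)\,ds$ is bounded uniformly but is \emph{not} small when $\la$ is of order $1$, so the Volterra series gives an upper envelope of the form $e^{C_0}I_\nu(\la r)$ but no lower envelope — the sign of the kernel actually pushes $w$ \emph{below} $I_\nu(\la r)$, and convergence of the series alone cannot rule out $w$ degenerating to zero. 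Second, the iteration must pass through both the regular singular point $r=0$ and the turning region $\la r\sim 1$ simultaneously with uniform constants, and this is precisely why the paper's own ODE lemma (Lemma \ref{thm-ode}, requiring $\|G\|_{L^1([\ep/\la,\infty))}\lesssim\la$) is stated only on $r\ge\ep/\la$: near the origin the centrifugal term is not integrable and a different argument is needed. The paper handles the inner region by rescaling $f_\la(x)=\phi_\la(x/\la)$, extracting a weak $H^1(B_1)$ limit along $\la_j\to 0$, identifying the limit as the positive solution of $\Delta f=f$ with $f|_{\partial B_1}=1$, and invoking the strong maximum principle to get $f(0)>0$; this compactness step is the mechanism that closes the loop, and some substitute for it is needed in your approach. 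Note also that without this lower bound your exhaustion limit $\phi_\la=\lim_R\phi_{\la,R}$ is only known to be nonnegative and could in principle vanish identically, so the positivity of the constructed solution itself hinges on the same missing estimate.
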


\section{Proof of Lemmas \ref{lem1}-\ref{elp}}\label{sec-pf-test}
In this section, we present the proof of Lemmas \ref{lem1}-\ref{elp}. by applying elliptic and ODE theory.

\subsection{Proof of Lemma \ref{lem1}}
We will find a radial solution $\phi_0(x)=\phi_0(|x|)=\phi_0(r)$.

At first,
for the region $0<r\leq 1$, we observe that the equation in $r$ is of the Euler type:
$$\Delta \phi_0=(\pa^{2}_r+\frac{n-1}{r}\pa_r)\phi_0=V\phi_0\ ,$$
for which it is natural to introduce
 a new variable $t$ with $r=e^{-t}$. Then $f(t)=\phi_0(e^{-t})$ satisfies
$$
\pa^{2}_t f-(n-2)\pa_t f=(v_0+e^{-t}V_0(e^{-t}))f\ , t\geq 0\ .
$$
Let $Y(t)=(Y_{1}(t), Y_{2}(t))^{T}$ with $Y_{1}=f$, $Y_{2}=\pa_t f$, then we have
$$Y'=(A+B(t))Y,
A=\left(\begin{array}{cc}0 & 1 \\ v_0 & n-2\end{array}\right),
B(t)=\left(\begin{array}{cc}0 & 0 \\ e^{-t}V_0(e^{-t}) & 0\end{array}\right)\ .$$
As $V_0(r)\in L^1_{r<1}$, that is, $B\in L^1[0, \infty)$, we could apply the Levinson theorem (see, e.g.,
\cite[Chapter 3, Theorem 8.1]{CoLe55}) to the system. Then there exists $t_0\in [0,\infty)$ so that we have two independent solutions, which have the asymptotic form
as $t\to \infty$
$$Y_{+}(t)=
\left(\begin{array}{c}1+o(1) \\
\la_1+o(1)\end{array}\right)
e^{\la_1 t}
\ , \ Y_{-}(t)=
\left(\begin{array}{c}1+o(1) \\
\la_2+o(1)\end{array}\right)
e^{\la_2 t}\ ,
$$
where $\la_1=\sqrt{(\frac{n-2}{2})^{2}+v_0}+\frac{n-2}{2}$, $\la_2=-\sqrt{(\frac{n-2}{2})^{2}+v_0}+\frac{n-2}{2}=-\rho(v_0)$.

We choose $Y(t)=Y_-(t)$ so that
$$\phi_0(r)=f(-\ln r)=(1+o(1))
r^{\rho(v_0)}$$
as $r\to 0$.
It is easy to check that $\phi_0\in H^1(B_\de)\cap W^{2, 1+}(B_\de)$, and
$$\pa_r\phi_0=-r^{-1} \pt f|_{t=-\ln r}=
(\rho(v_0)+o(1))r^{\rho(v_0)-1}
$$ for $r\in (0, 1]$.
Based on the assumption $n\geq 3$ or $v_0>0$, we have
$\rho(v_0)>0$ and so there exists $\de_0>0$ such that
\beeq\label{eq-test-d}\pa_r\phi_0\simeq r^{\rho(v_0)-1}\ , r\in (0,\de_0)\ .\eneq

In addition, for the case $n=2$ and $v_0=0$
when $r^2V$ is analytic in $(0,\de)$, by applying the Frobenius method, there is an analytic solution
$$\phi_0=r^{\la_1}\sum^{+\infty}_{j=0}a_j r^j \sim 1, \ r< \de\ ,$$
with $a_0=1$, where $\la_1=\rho(v_0)=0$ is the root of $\la^2=0$.
If $V\equiv 0$ near $0$, then $\phi_0=1$. Otherwise, there exists $k>0$ such that $b_j=0$ for all $j<k$ and $b_k>0$, in which we have
$a_j=b_j/j^2$ for any $j\in [1,k]$. Thus once again we have, for some $\de_0>0$, $$\pa_r \phi_0\simeq (\frac{b_k}{k}+o(1))r^{k-1}\ , r\in (0,\de_0)\ .$$

Similarly, for $r>1$, let $r=e^t$, then $F(t)=\phi_0(e^t)$ satisfies
$$
\pa^{2}_t F+(n-2)\pa_t F=(v_\infty+e^t V_\infty(e^t)) F\ , t\geq 0\ .
$$
Applying the Levinson theorem again, we know that there exists $c_1, c_2$ such that
$$
\phi_0(e^t)=c_1(1+o(1))
e^{\rho(v_\infty) t}
+c_2(1+o(1))
e^{(-\rho(v_\infty)-(n-2))t},$$
$$\pt \phi_0(e^t)=c_1(\rho(v_\infty)+o(1))
e^{\rho(v_\infty) t}
+c_2(2-n-\rho(v_\infty)+o(1))
e^{(-\rho(v_\infty)-(n-2))t}
$$
as $t\to\infty$.
Notice that $\rho(v_\infty)>0$, due to the assumption $n\geq 3$ or $v_\infty>0$.

By the fundamental well-posed theory of linear ordinary differential equation, we know that $\phi_0(r)\in C^2(0, \infty)$.
We claim that $\pa_r\phi_0(r)\geq 0$ for all $r>0$. Actually, we have seen from
\eqref{eq-test-d}
 that $\phi_0>0$ and $\pa_r\phi_0 \geq 0$ for $r\in (0, \de_0)$.  Suppose, by contradiction, there exists a $r_2>\de_0$ such that $\phi'_0(r_2)<0$ with $\phi_0(r)\geq 0$ for any $r\in (0, r_2]$. Then there is a $r_1<r_2$ such that $\pa_r\phi_0(r_1)=0$.  Recall that
$$\pa_r(r^{n-1}\pa_r\phi_0)=r^{n-1}V\phi_0\ .$$
By integrating it form $r_1$ to $r_2$, we get
$$r^{n-1}_2\phi'_0(r_2)=\int^{r_2}_{r_1}\tau^{n-1}V\phi_0(\tau)d\tau\geq 0\ ,$$
which is a contradiction. Hence we
%Since we have $\phi_0(r)\in C^2(0, \infty)$ and $0\leq V\in C(0, \infty)$, by maximum principle of elliptic equation, we have $\pa_r\phi_0\geq0$ for all $r>0$,
get $c_1>0$ and
$$\phi_0\sim r^{\rho(v_\infty)},
\pa_r\phi_0\sim r^{\rho(v_\infty)-1}
 \ r \gg1 \ .$$

\subsection{Proof of Lemma \ref{lem8}}
Suppose $V$ is H\"older continuous in $\overline{B_\de}$ for some $\de>0$. Then we
consider the Dirichlet problem
\beeq
\label{617-2}
\begin{cases}
\Delta\phi_0=V\phi_0, x\in B_{\de}\\
\phi_{0}|_{\pa B_{\de}}=1
\end{cases}
\eneq
By Gilbarg-Trudinger \cite[Theorem 6.14]{GT}, there exists a unique $C^2(\overline{B_\de})$ solution, which must be radial.  For $r>0$, with $\phi_0(x)=\phi_0(r) $, \eqref{617-2} becomes an ordinary differential equation
\beeq
\label{617}
\begin{cases}
\phi''_{0}+\frac{n-1}{r}\phi'_{0}-V\phi_0=0\\
\phi_0(\de)=1, \phi'_0(\de)=C
\end{cases}
\eneq
Then by the theory of ordinary differential equation, there is a unique solution $\phi_0(r)\in C^2(0, \infty)$, which agrees with $\phi_0$ in $B_\de$. Thus we get a solution $\phi_0(x)\in C^2(\R^n)$ and we could apply strong maximum principle to get $\pa_r\phi_0\geq0$ for all $r>0$ and $\phi_0(0)>0$.

Recall that
\beeq
\label{2-d1}
\pa_r(r^{n-1}\pa_r\phi_0)=r^{n-1}V\phi_0\ .
\eneq
By integrating it from $0$ to $r$, we get
\beeq
\label{2-d}
r^{n-1}\pa_r\phi_0=\int^{r}_{0}\tau^{n-1}V\phi_0 d\tau \ \les \ \phi_0\int^{r}_{0}(1+\tau)^{n-1-\be}d\tau\ .
\eneq
Thus, as $\be>2$, if $n\geq 3$ and $r\ge 1$, we have
$$\pa_r\phi_0\ \les \
 \phi_0r^{1-n}\int^{r}_{0}(1+\tau)^{n-1-\be}d\tau\les
\phi_0 r^{-1-\de}\ ,$$
for some $\de>0$.
By Gronwall's inequality, we obtain
$\phi_0 \les  \phi_0(1)$, for all $r\ge 1$,
which yields $$\phi_0\simeq 1\ .$$

For $n=2$, since $V$ is nontrivial, then there exists some $R>0$ such that $V\neq 0$ when $R\leq r \leq 2R$. By \eqref{2-d1}, we have
$$r\pa_r\phi_0=R\pa_r\phi_0(R)+\int^{r}_{R}\tau V\phi_0 d\tau \ \ge \ \phi_0(R)R\int^{2R}_{R}V(\tau)d\tau\geq C\ , \ r\geq 2R\ ,$$
hence we get
$$\phi_0(r)\geq \phi_0(R)+\int^r_R \frac{C}{\tau}d\tau \geq C\ln \frac{r}{R}+\phi_0(R) \ \ges \ \ln (2+r),\  r\geq 2R\ .$$
On the other hand, by \eqref{2-d}, we have
$$r\pa_r\phi_0=\int^{r}_{0}\tau V\phi_0 d\tau \ \les \ \phi_0(r)\int^{\infty}_{0} (1+\tau )^{1-\be} d\tau\  \les \ \phi_0(r)\ , $$
which gives us
\beeq\label{2-d-1}\phi_0 \ \les \  1+r\ , \forall \ r\geq 0\ .\eneq
By inserting \eqref{2-d-1} into \eqref{2-d}, we have for any $\de_1\in (0, \be/2-1)$ and $r\geq 1$
\beeq
\pa_r\phi_0\les
\left\{\begin{array}{ll}
r^{-1}, & \be>3,\\
r^{2-\be+\de_1},&\ 2<\be\leq 3\ .
\end{array}\right.
\eneq
Then it easy to obtain the desired upper bound $\ln(2+r)$ for $\be>3$, while for $\be\leq 3$,  we get $\phi_0(r)\les (1+r)^{3-\be+\de_1}$ for any $r\geq 0$ which is better than \eqref{2-d-1}.
%, when $\de_1<\be-2$.

 Thus, to obtain the expected upper bound, we do the iteration, by inserting the improved upper bound $\phi_0(r)\les (1+r)^{3-\be+\de_1}=(1+r)^{k}$ into \eqref{2-d} to get, for $r>1$,
\beeq
\nonumber
\pa_r\phi_0\les
\begin{cases}
r^{-1}, \ k+2<\be\\
r^{k+1-\be+\de_2},\  k+2\geq\be,
\end{cases}
\phi_0\les
\begin{cases}
\ln(2+r), \ k+2<\be\\
(1+r)^{k+2-\be+\de_2},\  k+2\geq\be\ ,
\end{cases}
\eneq
for any $\de_2\in (0, \be/2-1)$.
For $k+2\geq \be$, we can get the improved upper bound.
% if we take $\de_2$ small enough such that $2-\be+\de_2<0$.
 By repeating (finitely) steps, we can finally obtain
$$\phi_0(r)\les \ln (2+r)\ , \forall\ r\geq 0\ .$$

\subsection{Proof of Lemma \ref{lem4}}
To start with, we record a lemma from Liu-Wang \cite{LW2019} which we will use later.
%\begin[\cite[Lemma 3.1]{LW2019}]{lem}
\begin{lem}[Lemma 3.1 in \cite{LW2019}]
\label{thm-ode}
Let $\la\in (0, \la_0]$, $\de_0\in (0, 1)$, $\ep>0$, $y_0>0$,  $K\in (\de_0, \de_0^{-1})$,
\beeq
\label{0-eq-Gest}
\|K'\|_{L^1([\ep \la_0^{-1},\infty))}\le \de_0^{-1},
\|G\|_{L^1([\ep \la^{-1},\infty))}\le \de_0^{-1} \la, \forall \la\in (0, \la_0]\ .
\eneq
Considering
\beeq
\label{0-1.2}
\begin{cases}
y''-\lambda^{2}K^{2}(r)y+G(r)y=0, r>\ep\la^{-1}\\
y(\ep \la^{-1})=y_0, y'(\ep \la^{-1})=y_{1}\in (0, \de_0^{-1}\la y_0 )
\end{cases}
\eneq
Then for any solution $y$
with $y, y'>0$,
we have the following uniform estimates, independent of $\la\in (0,\la_0]$,
\beeq
\label{0.1-eq-1.6}y
\simeq  y_0 e^{\lambda\int^{r}_{\ep/\la }K(\tau)d\tau}
\ ,\ r\ge \ep\la^{-1}\ .\eneq
Assume in addition
$1-\lambda^{-2}K^{-2}G\in (\de_0, \de_0^{-1})$,
then
the solution $y$ to
\eqref{0-1.2} satisfies
 $y, y'>0$ and
we have
\beeq
\label{0.1-eq-1.6'} y'\simeq y_1+y_0\la (e^{\lambda\int^{r}_{\ep/\la }K(\tau)d\tau}-1)\ .\eneq
\end{lem}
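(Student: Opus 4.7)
My plan is to reduce the second-order linear ODE to a slowly varying problem by a WKB-type substitution, and then exploit the $L^1$ smallness of $G$ and $K'$ through a Volterra integral equation.

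First, I would substitute $y(r) = e^{\la \Phi(r)} z(r)$, where $\Phi(r) = \int_{\ep/\la}^r K(\tau)\,d\tau$. A direct computation shows that $z$ satisfies
\[
z'' + 2\la K z' + (\la K' + G) z = 0,
\]
with $z(\ep/\la) = y_0$ and $z'(\ep/\la) = y_1 - \la K(\ep/\la)\,y_0$, of absolute value $\les \la y_0$. The first claim $y \simeq y_0 e^{\la\Phi}$ then becomes $z \simeq y_0$ uniformly in $\la \in (0,\la_0]$. Using the integrating factor $e^{2\la\Phi}$, the equation becomes $(e^{2\la\Phi} z')' = -e^{2\la\Phi}(\la K' + G) z$. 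Integrating twice and switching the order of integration yields the Volterra identity
\[
z(r) = y_0 + z'(\ep/\la)\!\int_{\ep/\la}^r\! e^{-2\la\Phi(t)}\,dt - \int_{\ep/\la}^r (\la K'(s) + G(s))\,z(s)\,H_\la(s,r)\,ds,
\]
with kernel $H_\la(s,r) = \int_s^r e^{-2\la(\Phi(t)-\Phi(s))}\,dt \le (2\la\de_0)^{-1}$, since $K \ge \de_0$.

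The hypotheses $\|G\|_{L^1([\ep/\la,\infty))} \le \de_0^{-1}\la$ and $\|K'\|_{L^1} \le \de_0^{-1}$ combined with the kernel bound yield $\int (\la|K'(s)|+|G(s)|)\,H_\la(s,r)\,ds \le C(\de_0)$, a constant independent of $\la$. A standard Gronwall argument on the Volterra identity then gives the upper bound $|z(r)| \le C_1(\de_0) y_0$. For the matching lower bound $z \gtrsim y_0$, I would combine two ingredients: (i) the monotonicity $y(r) \ge y_0$ inherited from $y' > 0$, and (ii) a continuation/bootstrap argument on $[\ep/\la, \infty)$, assuming a priori $z \ge c_0 y_0$ and showing through the same Volterra identity (with the quantitative size of the boundary and convolution terms) that this closes for $c_0$ sufficiently small depending only on $\de_0$. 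Multiplying back by $e^{\la\Phi}$ then delivers the first conclusion \eqref{0.1-eq-1.6}.

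For the second claim, the stronger hypothesis $1 - \la^{-2}K^{-2}G \in (\de_0, \de_0^{-1})$ forces $\la^2 K^2 - G \simeq \la^2 K^2 > 0$; hence $y'' > 0$ wherever $y > 0$, so $y' > y_1 > 0$ and $y > 0$ are automatically preserved on $[\ep/\la,\infty)$ and the first conclusion applies. Integrating the ODE,
\[
y'(r) = y_1 + \int_{\ep/\la}^r (\la^2 K^2(s) - G(s))\,y(s)\,ds \simeq y_1 + \la^2 y_0 \int_{\ep/\la}^r K^2(s)\,e^{\la\Phi(s)}\,ds,
\]
and since $K$ is bounded above and below by $\de_0^{\pm 1}$, we have $\la^2 K^2(s)\,e^{\la\Phi(s)} \simeq \la\,\partial_s e^{\la\Phi(s)}$, so the integral equals $\la\bigl(e^{\la\Phi(r)}-1\bigr)$ up to multiplicative constants, giving \eqref{0.1-eq-1.6'}. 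The principal obstacle is closing the lower bound $z \gtrsim y_0$ uniformly in $\la$: the boundary contribution $z'(\ep/\la)\int e^{-2\la\Phi}\,dt$ can be negative of magnitude up to $y_0/\de_0^2$, so a naive estimate $z \ge y_0 - o(1)$ fails; the delicate balance between this boundary term, the Gronwall control of the Volterra integral, and the monotonicity $y \ge y_0$ is where the smallness hypotheses must be exploited sharply.
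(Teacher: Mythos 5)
Your WKB reduction to the Volterra equation for $z=e^{-\la\Phi}y$, the kernel bound $H_\la(s,r)\le(2\la\de_0)^{-1}$, and the Gronwall upper bound $|z|\le C(\de_0)y_0$ are all correct, as is your treatment of the second claim once the first is granted. (For what it's worth, the paper does not prove this lemma at all — it imports it from \cite{LW2019} — so the judgement is on the merits.) The genuine gap is exactly the one you flag: the lower bound $z\gtrsim y_0$, and neither of your two proposed ingredients can close it. The bootstrap ``assume $z\ge c_0y_0$ and reinsert'' fails structurally, because in the Volterra identity the potentially negative contributions — the boundary term $z'(\ep/\la)\int e^{-2\la\Phi}$, of size up to $\de_0^{-2}y_0/2$, and the convolution term, of size up to $\de_0^{-2}\sup|z|$ — are controlled by the \emph{upper} bound on $z$; assuming a lower bound on $z$ does not shrink them, so the best the identity yields is $z\ge y_0\bigl(1-O(\de_0^{-2})\bigr)$, which is vacuous, for any choice of $c_0$. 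The monotonicity $y\ge y_0$ only gives $z\ge y_0e^{-\la\Phi}\to0$, useless for large $r$. The structural reason is that $e^{2\la\Phi}z'=e^{\la\Phi}(y'-\la Ky)$: your Volterra equation is really tracking the amplitude of the \emph{decaying} mode, which is permitted to be as large as the upper bound allows, whereas a lower bound on $y$ requires control of the \emph{growing} mode.

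The argument that does work is a first-order factorization. Set $v=y'+\la Ky$; then $v'=\la Kv+(\la K'-G)y$, and since $y,y'>0$ force $0<\la Ky\le v$, hence $y\le v/(\la\de_0)$, one gets $(\ln v)'=\la K+O\bigl(\de_0^{-1}(|K'|+|G|/\la)\bigr)$ with an error of $L^1$ norm at most $2\de_0^{-2}$; integrating gives the two-sided bound $v\simeq v(\ep/\la)e^{\la\Phi}\simeq\la y_0e^{\la\Phi}$ with constants depending only on $\de_0$ (here the perturbation enters \emph{multiplicatively}, as $e^{O(\de_0^{-2})}$, rather than additively threatening a sign). Then solve the first-order equation $y'+\la Ky=v$ by Duhamel, $y(r)=y_0e^{-\la\Phi(r)}+\int_{\ep/\la}^re^{-\la(\Phi(r)-\Phi(s))}v(s)\,ds$, and use $\int_{\ep/\la}^re^{2\la\Phi(s)}ds\simeq\la^{-1}\bigl(e^{2\la\Phi(r)}-1\bigr)$ (from $\partial_se^{2\la\Phi}=2\la Ke^{2\la\Phi}$ and $K\simeq1$) to conclude $y\simeq y_0e^{\la\Phi}$, the regime $\la\Phi(r)\lesssim1$ being covered by $y\ge y_0$. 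With this in place, your derivation of \eqref{0.1-eq-1.6'} from $y'(r)=y_1+\int(\la^2K^2-G)y$ goes through as written.
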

\begin{proof}[Proof of Lemma \ref{lem4}]
For $r\leq \de$, by the similar proof of Lemma \ref{lem1}, there is a solution $\phi_{\la_0}$ satisfying
$$\phi_{\la_0}\sim r^{\rho(v_0+\la_0d_0)}\ , \phi_{\la_0}\in H^1(B_\de)\cap W^{2, 1+}(B_\de),
\pa_r \phi_{\la_0}\ge 0
\ .$$
When $r\geq \de$, with $\phi_{\la_0}(x)=\phi_{\la_0}(r) $, we need only to consider the following  ordinary differential equation
\beeq
\label{618-1}
\phi''_{\la_0}+\frac{n-1}{r}\phi'_{\la_0}=(\la_0^2+\la_0 D+V)\phi_{\la_0}\ , \phi_{\la_0}(\de)=C_1>0,\ \pa_r\phi_{\la_0}(\de)=C_2\ge 0\ .
\eneq Then, as in the proof of Lemma \ref{lem1},  there is a unique solution $\phi_{\la_0}(r)\in C^2(0, \infty)$ and $\pa_r\phi_{\la_0}(r)\geq 0$ for all $r>0$.

For $r\geq R$, we shall consider \eqref{618-1} with $\phi_{\la_0}(R)>0$ and $\phi'_{\la_0}(R)\ge 0$.
 Let $y=r^{\frac{n-1}{2}}\phi_{\la_0}$, then the new function $y$ satisfies
\beeq
\begin{cases}
y''-(\la_0^2+\frac{d_\infty}{r}\la_0)y-(\frac{(n-1)(n-3)}{4r^2}+V+\la_0 D_\infty)y=0\\
y(R)=R^{\frac{n-1}{2}}\phi_{\la_0}(R), y'(R)\geq \frac{n-1}{2}R^{\frac{n-3}{2}}\phi_{\la_0}(R)> 0
\end{cases}
\eneq
Thus by Lemma \ref{thm-ode}, we have
$$y\simeq e^{\int^{r}_R (\sqrt{\la_0^2+\la_0\frac{d_\infty}{\tau}})d\tau}\simeq r^{\frac{d_\infty}{2}}e^{\la_0 r}, r\geq R\ ,$$
which yields
$$\phi_{\la_0}\simeq r^{-\frac{n-1-d_\infty}{2}}e^{\la_0 r}, r\geq R\ .$$
\end{proof}

\subsection{Proof of Lemma \ref{lem2}}Suppose $\la_0^2+V+\la_0 D$ is H\"older continuous in $\overline{B_\de}$ for some $\de>0$. Then we
consider the Dirichlet problem
\beeq
\label{617-3}
\begin{cases}
\Delta\phi_{\la_0}=(\la_0^2+\la_0 D+V)\phi_{\la_0}, x\in B_{\de}\\
\phi_{\la_0}|_{\pa B_{\de}}=1.
\end{cases}
\eneq
By Theorem 6.14 in Gilbarg-Trudinger \cite{GT}, there exists a unique $C^2(\overline{B_\de})$ solution. Hence we could apply maximum principle to get $0< \phi_{\la_0}\leq 1$ in $B_\de$ and $\pa_{r}\phi_{\la_0}\ge 0$ for all $0<r\leq \de$. When $r\geq \de$,  with $\phi_{\la_0}(x)=\phi_{\la_0}(r)$, \eqref{617-3} becomes an ordinary differential equation
\beeq
\label{617}
\begin{cases}
\phi''_{\la_0}+\frac{n-1}{r}\phi'_{\la_0}-(\la_0^2+\la_0 D+V)\phi_{\la_0}=0\\
\phi_{\la_0}(\de)=1, \phi'_{\la_0}(\de)=C_2\ge 0.
\end{cases}
\eneq
Then  $\phi_{\la_0}\in C^2(\R^n)$ and we could apply strong maximum principle again to get $\pa_r\phi_{\la_0}\ge 0$ for all $r>0$.

Furthermore, for $r\geq R$, we consider \eqref{617} from $r=R$, then we have $\phi_{\la_0}(R)>0, \phi'_{\la_0}(R)\geq 0$.
By the same argument as in the proof of Lemma \ref{lem4}, we get
$$\phi_{\la_0}\simeq r^{-\frac{n-1-d_\infty}{2}}e^{\la_0 r}, r\geq R\ ,$$
which completes the proof.

\subsection{Proof of Lemma \ref{elp}}

%\begin{proof}

We first show that \eqref{509} admits a $C^2(\R^n)$ solution. For $r\leq \la^{-1}$, we consider the Dirichlet problem within $B_{ \la^{-1}}$
\beeq
\label{Diri}
\begin{cases}
\Delta\phi_\la-\la D(x)\phi_\la=\lambda^{2}\phi_\la, x\in B_{ \la^{-1}}\\
\phi_\la|_{\pa B_{\la^{-1}}}=1\ .
\end{cases}
\eneq
By Theorem 6.14 in Gilbarg-Trudinger \cite{GT}, there exists a unique (and hence radial) $C^2(B_{ \la^{-1}})$ solution. For $r>0$, the equation is reduced \eqref{509} to a second order ordinary differential equation
$$\phi''_\la+\frac{n-1}{r}\phi'_{\la}-\la D(r)\phi_{\la}=\la^2\phi_\la\ ,$$
which ensures that $\phi_\la\in C^{2}(\R^n)$. To obtain \eqref{510}, we need to divide $\R^n$ into two parts: $B_{1/\la}$ and $\R^n\backslash B_{1/\la}$.\\
% Without loss of generality, we assume $\phi_\la(1/\la)=1$.\\

{(\bf \uppercase\expandafter{\romannumeral1}) Inside the ball $B_{1/\lambda}$}\\

Considering the Dirichlet problem \eqref{Diri} within $B_{1/\lambda}$,
it is easy to see that $0< \phi_\la\leq 1$ in $B_{1/\lambda}$. In fact, if there exists $x_{0}\in B_{1/\lambda}$ such that $\phi_\la(x_{0})\leq 0$, then by strong maximum principle, we get $\phi_\la$ is constant within $B_{1/\lambda}$, which is a contradiction. By Hopf's lemma, we have $\pa_{r}\phi_\la>0$ for $0<r\leq 1/\la$.

To get the uniform lower bound of $\phi_\la$, we define rescaled function $f_{\lambda}(x)=\phi_\la(x/\la)$, which satisfies
\begin{align*}
\begin{cases}
\Delta f_{\lambda}-\frac{1}{\la}D(\frac{r}{\la})f_\la=f_{\lambda}, x\in B_{1}\\
f_{\lambda}|_{\pa B_{1}}=1\ .
\end{cases}
\end{align*}
Since $f_{\lambda}$ is radial increasing, we know that
$$\liminf_{0<\lambda\leq 1}\inf_{x\in B_{1}}f_{\lambda}=\liminf_{0<\lambda\leq 1}f_{\lambda}(0):=C\ge 0\ .$$
To complete the proof, we need only to prove
$C>0$. By definition, there exists a sequence $\lambda_{j}\to 0$ such that $f_{\lambda_{j}}(0)\to C$ as $j\to \infty$.\par
{(\expandafter{\romannumeral1}) Derivative estimates of $f_\la$}.

As $f_\la$ is radial, we have
$$\Delta f_\la=r^{1-n}\pa_{r}( r^{n-1}\pa_{r}f_\la)\ ,$$
and so
\beeq
\label{s01}
\pa_{r}( r^{n-1}\pa_{r}f_\la)=\Big(\frac{1}{\la}D(\frac{r}{\la})+1\Big)r^{n-1}f_\la.
\eneq
Recall that $f_\la\in(0, 1]$ and $D(r)\leq \frac{\mu}{(1+|x|)^{\be}}$, by integrating it from $0$ to $r$, we get
\begin{align*}
r^{n-1}\pa_{r}f_\la=
&\int_0^r\Big(\frac{1}{\la}D(\frac{\tau}{\la})+1\Big)\tau^{n-1}f_\la d\tau\\
\leq &\int_0^r\Big(\frac{\mu \la^{\be-1}}{(\la+\tau)^{\be}}+1\Big)\tau^{n-1}f_\la d\tau\\
\leq&\int_0^r\Big(\frac{\mu}{\tau}+1\Big) \tau^{n-1} d\tau\\
\leq&\frac{\mu}{n-1}r^{n-1}+\frac{r^n}{n}\ ,
\end{align*}
that is,
\beeq
\label{yiyou}
\pa_{r}f_{\lambda}
\le\frac{\mu}{n-1}+\frac{r}{n}\ .
\eneq

{(\expandafter{\romannumeral2}) Convergence of $f_\la$}.

Since $\|f_\la\|_{H^{1}(B_{1})}$ are uniformly bounded, there exists a subsequence of $\lambda_{j}$ (for simplicity we still denote the subsequence as $\lambda_{j}$) such that
$f_{\lambda_{j}}$ converges weakly to some
$f$ in $H^{1}(B_1)$ as $j$ goes to infinity. Moreover, by the Arzela-Ascoli theorem, $f_{\lambda_{j}}$ converges uniformly to
$f$ in $C(\bar B_{1})$ as $j$ goes to infinity, thus $f=1$ on $\pa B_1$ and $f(0)=C$.

In view of the equations satisfied by $f_{\la_j}$, we see that, for any $\phi\in C^\infty_c(B_1)$, we have
\beeq
\label{eq-1027}
\int_{B_1} \nabla f_{\la_j} \nabla\phi +f_{\la_j} \phi+\frac{1}{\la_j}D\big(\frac{r}{\la_j}\big)f_{\la_j}\phi   d x=0\ .
\eneq
Let $g_{\la_j}=\frac{1}{\la_j}D(\frac{r}{\la_j}) f_{\la_j}\phi$, then it is easy to see
$$|g_{\la_j}|\leq |\phi|\frac{\mu\la_{j}^{\be-1}}{(\la_j+|x|)^{\be}}\to 0, \ \forall x\in B_{1}\backslash\{0\} \ , $$
as $\la_j \to 0$.
Notice that
$$|g_{\la_j}|\le\ \frac{\mu |\phi|}{\la_j+|x|}\ \le \ \frac{\mu \|\phi\|_{L^\infty}}{|x|}\in L^{1}(B_{1})\ ,$$
by dominated convergence theorem, we have
$$\lim_{\la_j\to0}\int_{B_1}g_{\la_j}dx=0\ .$$
Thus let $\la_j \to 0$ in \eqref{eq-1027}, we get
\beeq\label{eq-1027.1}\int_{B_1} (\nabla f \cdot \nabla \phi +f \phi) d x=0\ ,\eneq for any $\phi\in C^\infty_c(B_1)$. This tells us that $f\in H^1(B_1)$ is a weak solution to the Poisson equation
\begin{align*}
\begin{cases}
\Delta f=f, x\in B_{1},\\
f|_{\pa B_{1}}=1\ ,
f(0)=C\ .
\end{cases}
\end{align*}
By regularity and
strong maximum principle, we know that $f\in C^\infty(B_1)$ and
$f(0)=C>0$, which completes the proof of the claim $C>0$.\\

{\bf (\uppercase\expandafter{\romannumeral2}) Outside the ball $B_{1/\la}$}\\

Inspired by Lemma \ref{thm-ode}, we try to reduce \eqref{509} to a second order ordinary differential equation by finding a radially symmetric solution when $r\geq 1/\lambda$.
Before proceeding, we need to estimate the derivative of $\phi_\la$. Recall that $f_\la(r)=\phi_\la(\frac{r}{\la})$, then by \eqref{yiyou}, we have
$$(\pa_r\phi_\la)(\la^{-1})=\la\pa_r f_\la(1)\leq \left(\frac{\mu}{n-1}+\frac{1}{n}\right)\la=C_1 \la\ .$$
\par
We consider the second order ordinary differential equation
\beeq
\begin{cases}
\phi''_\la+\frac{n-1}{r}\phi'_{\la}-\la D(r)\phi_{\la}=\la^2\phi_\la\\
\phi_{\la}(\frac{1}{\la})=1, \phi'_{\la}(\frac{1}{\la})\in (0, C_1\la]\ .
\end{cases}
\eneq
Let $\phi_{\la}(r)=r^{-\frac{n-1}{2}}y(r)$, then $y$ satisfies
\beeq
y''-\la^{2}y-\Big(\frac{(n-1)(n-3)}{4r^{2}}+\la D(r)\Big)y=0
\eneq
with initial data
$$y(\la^{-1})=\la^{-\frac{n-1}{2}}, y'(\la^{-1})=\frac{n-1}{2}\la^{-\frac{n-3}{2}}+\la^{-\frac{n-1}{2}}\phi_\la'(\la^{-1})\in  (0, C_2\la y(\la^{-1}) ]\ ,$$
where $C_2=\frac{n-1}{2}+C_1$.
Thus by Lemma \ref{thm-ode} with $K=1, \ep=1, \la_0=1$, we have
$$y\simeq \la^{-\frac{n-1}{2}}e^{\la r}, r\la\geq 1\ ,$$
which yields
$$\phi_{\la}(x)\simeq (\la|x|)^{-\frac{n-1}{2}}e^{\la |x|}, \la|x|\geq 1\ .$$

Combining (I), (II),
 we conclude that there exist  uniform $c_1\in(0, 1)$ and  solutions $\phi_\la$ of \eqref{509} with $\la\in (0,1]$ satisfying the uniform estimates \eqref{510}.

\section{Proof of Theorem \ref{general} }\label{sec:AE3}

In this section, we prove Theorem \ref{general}.

\subsection{Test function method}
Equipped with the test functions, we could construct
two kinds of radial solutions to the linear dual problem $$(\pt^2-\Delta-D\pt+V)\Phi=0\ ,$$
that is,
 $$\Phi_0(t,x)=\phi_0(x),\
\Phi_{\la}(t,x)=e^{-\la t}\phi_\la (x)\ .$$

In addition to these solutions, we will also introduce a smooth cut-off function.
Let $\eta(t)\in C^{\infty}([0, \infty))$ such that
$$\eta(t)=1, t\leq \frac{1}{2}\ , \ \ \eta(t)=0, t\geq 1\ .$$
Then for $T\in (2, T_{\vep})$, we set $\eta_T(t)=\eta(t/T)$.

Let $\Psi_\la=\eta_T^{2p'}(t)\Phi_\la(t,x)$, where $\Phi_\la\in C^2([0, T]\times (\R^n\backslash\{0\}))\cap  C^0_t H_{loc}^1\cap C^1_t L_{loc}^2)([0, T]\times \R^n)$.
Then by the definition of energy solution \eqref{weaksol},
 we have
\begin{eqnarray*}
&&%\vep\int_{\R^n}(g(x)+D(x)f(x))dx+
\int_0^T\int_{\R^n}|u|^p\Psi_{\la} dxdt
-\left.\int_{\R^n}(u_t(t,x)+D(x)u(t,x))\Psi_{\la}(t, x)dx\right|_{t=0}^T\\
 \\
 & = &  -\int_0^T\int_{\R^n}u_t(t, x)\pt\Psi_{\la}(t, x)dxdt+\int_0^T\int_{\R^n}\nabla u(t, x)\cdot\nabla\Psi_{\la}(t, x)dxdt\\
&&-\int_0^T\int_{\R^n}D(x)u(t, x)\pt\Psi_{\la}(t, x)dxdt+\int_0^T\int_{\R^n}V(x)u(t, x)\Psi_{\la}(t, x)dxdt\\
&=&
-\int_{\R^n}u(t, x)\pt\Psi_\la(t, x)dx|_{t=0}^T+
\int_0^T\int_{B_{t+R}}u(\pt^2
-\Delta -D\pt+V)\Psi_\la
dxdt\ ,
\end{eqnarray*}
where all of the integration by parts and integrals could be justified by the properties of $\Phi_\la$ and the support assumption $\supp u(t,\cdot)\subset B_{t+R}$.

Noticing that $$\Psi_\la(T)=\pt\Psi_\la(T)=0,
\pt \Psi_\la=-\la \Psi_\la+\pt (\eta_T^{2p'})\Phi_\la\ ,$$
\[
\begin{aligned}
&\partial_t\eta_T^{2p'}=\frac{2p'}{T}\eta_T^{2p'-1}\eta'(\f{t}{T})=\mathcal{O}(\frac{\eta_T^{2p'-1}}{T}\chi_{[\frac T2,T]}(t)),\\
&\partial_t^2\eta_T^{2p'}=\frac{2p'(2p'-1)}{T^2}\eta_T^{2p'-2}|\eta'|^2+\frac{2p'}{T^2}\eta_T^{2p'-1}\eta''=\mathcal{O}(\frac{{\eta_T^{2(p'-1)}}}{T^2}\chi_{[\frac T2,T]}(t))\ .
\end{aligned}
\]
The integral identity could be reorganized into the following form:
\begin{eqnarray}
&&%\vep\int_{\R^n}(g(x)+D(x)f(x))dx+
\int_0^T\int_{\R^n}|u|^p\Psi_{\la} dxdt
+\vep\int_{\R^n}(g(x)+(\la+D(x))f(x))\Phi_{\la}( x)dx\nonumber\\
&=&
%-\int_{\R^n}u(t, x)\pt\Psi_\la(t, x)dx|_{t=0}^T+
\int_0^T\int_{B_{t+R}}u(\pt^2
-\Delta -D\pt+V)\Psi_\la
dxdt\nonumber\\
&=&
\int_0^T\int_{B_{t+R}}u(\pt^2 (\eta_T^{2p'})+2\pt (\eta_T^{2p'})\pt
-D\pt (\eta_T^{2p'}))\Phi_\la
dxdt\nonumber
\\
&=&
\int_0^T\int_{B_{t+R}}u(\pt^2 (\eta_T^{2p'})-2\la \pt (\eta_T^{2p'})
-D\pt (\eta_T^{2p'}))\Phi_\la
dxdt\nonumber
\\
&\le &C
\int_{T/2}^T\int_{B_{t+R}}|u| \eta_T^{2(p'-1)}(T^{-2}+
(2\la+|D|) T^{-1})\Phi_\la
dxdt
\ .\label{eq-key}
\end{eqnarray}

Basically, the test function method is to construct specific test function, so that we could try to use the integral inequality to control the right hand side by the left hand side, which gives the lifespan estimates.

Before proceeding, let us present the following  technical Lemma.
\begin{lem}\label{keyinq}
Let $\beta>0$, $\alpha,\ \gamma\in \R$ and $R>0$,
there exists a constant $C$, independent of $t>2$, so that
\begin{equation}
\label{keyinq0}
\begin{aligned}
\int_0^{t+R}(1+r)^{\alpha}\ln^{\gamma}(1+r)e^{-\beta(t-r)}dr\le C(t+R)^{\alpha}\ln^{\gamma}(t+R).
\end{aligned}
\end{equation}
\end{lem}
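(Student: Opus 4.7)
The plan is to exploit the fact that $e^{-\beta(t-r)}$ concentrates the integrand near the upper endpoint $r = t+R$, where $(1+r)^{\alpha}\ln^{\gamma}(1+r)$ is comparable to $(t+R)^{\alpha}\ln^{\gamma}(t+R)$. First I would change variables via $s = t+R-r$, rewriting the integral as
$$I = e^{\beta R}\int_0^{t+R}(1+t+R-s)^{\alpha}\ln^{\gamma}(1+t+R-s)\,e^{-\beta s}\,ds,$$
so that the exponential decay is placed at $s = 0$, which is precisely where the polynomial-logarithmic weight is maximized.

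Next I would split the $s$-integral at $s_0 := (t+R)/2$. On $[0,s_0]$ the argument $1+t+R-s$ lies in $[1+(t+R)/2,\,1+t+R]$, which is uniformly comparable to $t+R$ for $t>2$, so $(1+t+R-s)^{\alpha}\ln^{\gamma}(1+t+R-s) \le C_{\alpha,\gamma}(t+R)^{\alpha}\ln^{\gamma}(t+R)$. Pulling this out and bounding $\int_0^{\infty}e^{-\beta s}\,ds = 1/\beta$ produces the main contribution of size $C(t+R)^{\alpha}\ln^{\gamma}(t+R)$.

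On the complementary range $[s_0, t+R]$ the bound $e^{-\beta s}\le e^{-\beta(t+R)/2}$ provides exponential decay in $t+R$, while the polynomial-logarithmic factor together with the length of the interval contribute at most a fixed polynomial in $t+R$, say $C(1+t+R)^{|\alpha|+1}(\ln(2+t+R))^{|\gamma|}$ when $\gamma\ge 0$. Since any exponential dominates any polynomial, this piece is controlled by $(t+R)^{\alpha}\ln^{\gamma}(t+R)$ once $t$ exceeds a threshold $t_0$ depending only on $\alpha,\beta,\gamma,R$. For $t\in(2,t_0]$, both sides of the claimed inequality are trapped between positive constants depending on these parameters, so the inequality holds after enlarging $C$.

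No step is a true obstacle: the argument is a standard Laplace-method-type splitting combined with the principle that exponentials beat polynomials. The only care required is to check that each constant depends only on $\alpha,\beta,\gamma,R$ and not on $t$, and to keep track of the signs of $\alpha$ and $\gamma$ when producing the crude polynomial-in-$(t+R)$ envelope on the far-from-endpoint piece.
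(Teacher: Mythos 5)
Your proof is correct and is essentially the paper's proof: the change of variables $s=t+R-r$ is purely cosmetic, the split at $(t+R)/2$ coincides with the paper's split of the $r$-integral into $[\,(t+R)/2,\,t+R\,]$ and $[\,0,\,(t+R)/2\,]$, and the two pieces are bounded in the same way (comparability of the weight to $(t+R)^\alpha\ln^\gamma(t+R)$ near the endpoint, and ``exponential beats polynomial'' away from it).
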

\begin{prf} We split the proof into two parts. First it is easy to see
\begin{eqnarray*}
\int_{\frac{t+R}{2}}^{t+R}(1+r)^{\alpha}\ln^{\gamma}(1+r)e^{-\beta(t-r)}dr &\le& Ce^{-\beta t}(t+R)^{\alpha}\ln^{\gamma}(t+R)\int_{\frac{t+R}{2}}^{t+R}e^{\beta r}dr\nonumber\\
&\le& C(t+R)^{\alpha}\ln^{\gamma}(t+R).%\label{keyinq1}
\end{eqnarray*}
For the remaining case, we have
%\begin{equation}\label{keyinq2}
%\begin{aligned}
\begin{eqnarray*}
\int_0^{\frac{t+R}{2}}(1+r)^{\alpha}\ln^{\ga}(1+r)e^{-\beta(t-r)}dr&
\le& Ce^{-\frac{\beta t}{2}}\int_0^{\frac{t+R}{2}}(1+r)^{|\alpha|+1} dr\\
&\le& Ce^{-\frac{\beta t}{2}} (t+R)^{|\alpha|+2}\\
& \le& C(t+R)^{\alpha}\ln^\ga(t+R)\ ,\end{eqnarray*}
which completes the proof.
\end{prf}

\subsection{First choice of the test function $\Phi_0(t,x)$ with $\la=0$}\label{sec:1st}
Let $\la=0$, we have $\phi_0$ ensured by
 Lemma \ref{lem1}.
 % and Lemma \ref{lem8}.
  With help of $\Phi_0=\phi_0$, the inequality \eqref{eq-key} reads as follows
\begin{eqnarray*}
&&C_1(f, g)\vep+\int_0^T\int_{\R^n}|u|^p\eta_T^{2p'}\phi_0dxdt\\
&\le&
C \int_0^T\int_{\R^n}|u| \eta_T^{2(p'-1)}(T^{-2}+
|D| T^{-1})\phi_0
dxdt\\
&\le&
\frac 12\int_0^T\int_{\R^n}|u|^p\eta_T^{2p'}\phi_0 dx dt +C
\int_{T/2}^T\int_{B_{t+R}}
 (T^{-2}+
|D| T^{-1})^{p'}\phi_0
dxdt \ ,
\end{eqnarray*}
where $C_1(f, g)=\int_{\R^n}(g(x)+D(x)f(x))dx$
and
we have used the H\"older and Young's inequality in last inequality.
In conclusion,
\beeq
\label{step1}
C_1(f, g)\vep+\int_0^T\int_{\R^n}|u|^p\eta_T^{2p'}\phi_0dxdt\le C
\int_{T/2}^T\int_{B_{t+R}}
 (T^{-2}+
|D| T^{-1})^{p'}\phi_0
dxdt=F_0 \ ,
\eneq

Concerning the right hand side of \eqref{step1},
 by our assumption $D=\CO(r^{\theta-2})$ locally,
 $D=\CO(r^{-1})$ near spatial infinity, and Lemma \ref{lem1},
 we see that $F_0$ could be controlled by
\begin{eqnarray*}
&& T^{-p'} \int_{T/2}^T\int_0^{1}(T^{-1}+r^{\theta-2})^{p'} r^{\rho(v_0)+n-1}drdt\\
&&+T^{-p'}\int_{T/2}^T\int_1^{t+R}(T^{-1}+r^{-1})^{p'}r^{\rho(v_\infty)+n-1}drdt \\
&\les&T^{-2p'}\left(T+T^{\rho(v_\infty)+n+1}\right)+T^{1-p'}
+T^{1-p'}\int_1^{T+R}r^{-p'+\rho(v_\infty)+n-1}dr
\\
&\les&
\left\{\begin{array}{ll}
\max\{T^{1-2p'},T^{-2p'+\rho(v_\infty)+n+1},T^{1-p'}\}& p'\neq\rho(v_\infty)+n\\
T^{1-p'}\ln T & p'=\rho(v_\infty)+n
\end{array}
\right.
\\
&=&
\left\{\begin{array}{ll}
T^{-2p'+\rho(v_\infty)+n+1} & p'<\rho(v_\infty)+n\\
T^{1-p'}\ln T & p'=\rho(v_\infty)+n\\
T^{1-p'} & p'>\rho(v_\infty)+n\ ,
\end{array}
\right.
\end{eqnarray*}
where, %we have used the assumption
%$\rho(v_\infty)+n>0
 to ensure the integrability of first term of second bracket, we need to assume
 \beeq\label{eq-p2}
 (\theta-2)p'+\rho(v_0)+n>0\Leftrightarrow
 p>p_2:=
 \f{n+\rho(v_0)}{n+\rho(v_0)+\theta-2}\ .
% \left\{\begin{array}{ll}\f{n+\rho(v_0)}{n+\rho(v_0)+\theta-2},& n+\rho(v_0)+\theta>2, \\ 1 & n+\rho(v_0)+\theta=2 .\end{array}\right.
\eneq
Also, we observe that
\beeq\label{eq-p3} p'<\rho(v_\infty)+n\Leftrightarrow
p>p_3:=\f{n+\rho(v_\infty)}{n+\rho(v_\infty)-1}\ .
\eneq
In conclusion, provided that $p>p_2$, we have
\begin{equation}\label{F0}
F_0\les \left\{\begin{array}{ll}
T^{-2p'+\rho(v_\infty)+n+1} & p>\max(p_2, p_3)\\
T^{1-p'}\ln T  & p=p_3>p_2\\
T^{1-p'} &
p_2<p<p_3, \mathrm{if}\ p_2<p_3\ .
\end{array}
\right.
\end{equation}

Recalling \eqref{step1}, if we have data such that $C_1(f, g)>0$, which is always true for $f=0$ and
nontrivial, nonnegative and compactly supported
$g$, we have
$\vep\les F_0$. Then
we obtain the blow up results for $p_2<p<p_G$, whenever $(p_2, p_G(n+\rho(v_\infty)))\neq \emptyset$, and
 $$
-2p'+\rho(v_\infty)+n+1<0\Leftrightarrow p<p_G(n+\rho(v_\infty))
%=1+\frac{2}{n+\rho(v_\infty)-1}
\ .
$$ At the same time, we could extract the first three lifespan estimates in \eqref{lifespan1}.

\subsection{Second choice of the test function $\Phi_1(t,x)$ with $\la=1$}
Let $\la=1$, we have $\phi_1$ ensured by
 Lemma \ref{lem4} with $\la_0=1$.
 % and Lemma \ref{lem8}.
  With help of $\Phi_1=e^{-t}\phi_1$,  $\Psi_1=\eta_T^{2p'}(t)\Phi_1$,
  as for  \eqref{step1},   the inequality \eqref{eq-key} gives us
\begin{eqnarray}
&&\int_0^T\int_{\R^n}|u|^p\Psi_1 dxdt + C_2(f,g) \vep
%\int_{\R^n}(g+f+D(x)f)\phi_1dx
\nonumber\\
&\le &C
\int_{T/2}^T\int_{B_{t+R}}|u| \eta_T^{2(p'-1)} %T^{-2}+
\frac {2+|D|} T\Phi_1
dxdt\label{middle}\\
&\le&
\frac 12 \int_{T/2}^T\int_{\R^n}|u|^p \eta_T^{2p'}
\Phi_1
dxdt+C\int_{T/2}^T\int_{B_{t+R}}
\left(
\frac {2+|D|} T\right)^{p'}\Phi_1
dxdt
% C\int_{T/2}^T\int_{\R^n} (T^{-2}+(2+|D|) T^{-1})^{p'}\phi_1dxdt=F_1
\ ,\nonumber
\end{eqnarray}
and so
\beeq
\label{small_p}
%\label{middle1}%\int_0^T\int_{\R^n}|u|^p\Psi_1 dxdt +
C_2(f,g) \vep \ %\int_{\R^n}(g+(1+D)f)\phi_1dx
\les
\int_{T/2}^T\int_{B_{t+R}}
\left(
\frac {2+|D|} T\right)^{p'}\Phi_1
dxdt\ ,\eneq
where $C_2(f,g):=\int_{\R^n}\phi_1(Df+f+g)dx$.

As in the previous section \ref{sec:1st},
we could extract certain lifespan estimate from \eqref{small_p}.
Actually,
by \eqref{small_p}, Lemma \ref{keyinq} and  Lemma \ref{lem4} with $\la_0=1$, we have
\begin{equation}\label{lastcase}
\begin{aligned}
\vep \les& T^{-p'}\int_{\f{T}2}^T\int_{|x|\leq t+R}(2+|D(x)|^{p'})\Phi_1dxdt\\
\les& T^{-p'}\Bigg(\int_{\frac T2}^T\int_{r\le 1} (1+r^{(\theta-2)p'})e^{-t}r^{\rho(v_0+ d_0)+n-1}drdt\\
&+\int_{\frac T2}^T\int_{1\le r\le t+R}(1+r^{-p'})e^{-t}e^{r}r^{\f{1-n+d_\infty}{2}+n-1}drdt\Bigg)\\
\les&T^{-p'}\left(e^{-\frac T2}+T^{\f{n+1+d_\infty}{2}}\right)
\les T^{-p'+\f{n+1+d_\infty}{2}},\\
\end{aligned}
\end{equation}
provided that $C_2(f,g)>0$, and
\[
\rho(v_0+d_0)+n+(\theta-2)p'>0,  i.e.,
p>p_4:=\f{n+\rho(v_0+d_0)}{n+\rho(v_0+ d_0)+\theta-2}\ .
\]

Based on \eqref{lastcase},
when
$$-p'+\f{n+1+d_\infty}{2}<0\Leftrightarrow
p<p_G(n+d_\infty)\ ,$$
 we obtain
the last lifespan estimate in \eqref{lifespan1}
for any $p\in (p_4, p_G(n+d_\infty))\neq \emptyset$,
whenever
$f=0$ and nontrivial $g\ge 0$.

\subsection{Combination}
It turns out that we have not exploited the full strength of $\Psi_0$ and $\Psi_1$. Actually, a combination of
\eqref{step1} and
\eqref{middle} could give us more information on the lifespan estimates.

To connect \eqref{middle} with that appeared in \eqref{step1},
we try to control
the middle term in
 \eqref{middle}  by the left of \eqref{step1}, that is,
\begin{equation*}
\begin{aligned}
\vep T  \les& \int_{T/2}^T\int_{B_{t+R}}|u| \eta_T^{2(p'-1)}
(2+|D|)\Phi_1
dxdt
\\
\les& \left(\int_0^T\int_{\R^n}\eta_T^{2p'}\phi_0|u|^p dxdt\right)^{\frac1p}\left(\int_\f{T}2^T\int_{|x|\leq t+R}(2+|D(x)|)^{p'}\phi_0^{-\f{p'}{p}}\Phi_1^{p'}dxdt\right)^{\frac{1}{p'}}.
\end{aligned}
\end{equation*}
Thus, combining it with \eqref{step1}, we derive that
\beeq\label{large_p}
\vep^p T^p  \les  F_0\left(\int_\f{T}2^T\int_{|x|\leq t+R}(2+|D(x)|)^{p'}\phi_0^{-\f{p'}{p}}\Phi_1^{p'}dxdt\right)^{p-1}:=F_0 F_1^{p-1}\ . \eneq

Concerning $F_1$,  by Lemma \ref{lem1} and \ref{lem4}, we have
\begin{eqnarray*}
F_1&=&\int_{\frac T2}^T\int_{|x|\leq t+R}(2+|D|^{p'})\phi_0^{-\f{p'}{p}}\Psi_0^{p'}dxdt \\
&\les&\int_{\frac T2}^Te^{-t p'}\int_0^1(2+r^{(\theta-2)p'})r^{-\f{p'}{p}\rho(v_0)+p'\rho(v_0+d_0)+n-1}drdt\\
&&+\int_{\frac T2}^T\int_1^{t+R}(2+|D|)^{p'}r^{-\f{p'}{p}\rho(v_\infty)-p'\f{ n-1-d_\infty }{2}+n-1}e^{(r-t)p'}drdt.
\end{eqnarray*}

To ensure the integrability near $r=0$, we need to require that
\[(\theta-2)p'-\f{p'}{p}\rho(v_0)+p'\rho(v_0+d_0)+n-1>-1,\]
 that is
\begin{equation}\label{p-largerange}
p>p_1:=\f{n+\rho(v_0)}{n+\rho(v_0+d_0)+\theta-2}\ .
\end{equation}
While for the second integral, we utilize Lemma \ref{keyinq}  with  $\be=1$, as well as $|D|\les r^{-1}$ for $r>1$, to conclude
\begin{eqnarray*}
&&\int_1^{t+R}(2+|D|)^{p'}r^{-\f{p'}{p}\rho(v_\infty)-p'\f{(n-1-d_\infty)}{2}+n-1}e^{(r-t)\la_0 p'}dr\\
&\les&\int_1^{t+R}r^{-\f{p'}{p}\rho(v_\infty)-p'\f{n-1-d_\infty}{2}+n-1}e^{(r-t)\la_0 p'}dr\\
&\les&(t+R)^{-\f{p'}{p}\rho(v_\infty)-p'\f{n-1-d_\infty}{2}+n-1}.
\end{eqnarray*}
Thus we have
\beeq\label{F1}F_1\les e^{-Tp'/2}+T^{-\f{p'}{p}\rho(v_\infty)-p'\f{n-1-d_\infty}{2}+n}\les T^{-\frac{\rho(v_\infty)}{p-1}-p\f{n-1-d_\infty}{2(p-1)}+n}\ ,\eneq
if $p>p_1$.

In view of \eqref{F0}, \eqref{F1} and \eqref{large_p}, we arrive at, for $p>\max(p_1, p_2)$,
$$\vep^p  \les T^{-p}  F_0 F_1^{p-1}
%\les
%T^{ \rho(v_\infty)-p\f{n+1-d_\infty}{2}+n(p-1)}F_0%same as the next line
%T^{- \rho(v_\infty)+p\f{n+d_\infty-1}{2}-n}F_0
\les \left\{\begin{array}{ll}
T^{ \f{n+d_\infty-1}{2}p-2p' +1} & p> p_3\\
T^{\f{n+d_\infty-1}{2}p-2p' +1}\ln T  & p=p_3\\
T^{-\rho(v_\infty)+p\f{n+d_\infty-1}{2}-n+1-p'} &
p<p_3\ ,
\end{array}
\right.
$$ by which we are able to obtain the final set of lifespan estimates.

When $p>\max\{p_1,p_2,p_3\}$, we could obtain an upper bound of the lifespan, if
$$\f{n+d_\infty-1}{2}p-2p' +1<0
\Leftrightarrow
\f{n+d_\infty-1}{2}p(p-1)<p+1,
$$ that is,
$p<p_{S}(n+d_\infty)$. This gives us the
sixth lifespan estimate in \eqref{lifespan1}:
$$T_\vep\  \les  \vep^{-\f{2p(p-1)}{\gamma(p,n+d_\infty)}}.$$

In the critical case, $p=p_3\in (\max\{p_1,p_2\}, p_{S}(n+d_\infty))$, we obtain the estimate with log loss:
$$T_\vep\les \vep^{-\f{2p(p-1)}{\gamma(p,n+{d_\infty})}}(\ln \vep^{-1})^{\frac{2(p-1)}{\gamma(p,n+{d_\infty})}}\ . $$

For the remaining case of $\max\{p_1,p_2\}<p<p_3$,
we could obtain an upper bound of the lifespan, provided that
${-\rho(v_\infty)+p\f{n+d_\infty-1}{2}-n+1-p'}<0$, i.e.,
$$
\f{n+d_\infty-1}{2}p(p-1)
+(n+\rho(v_\infty)-1)(p_3-p)
<p+1\Leftrightarrow
\ga_2>0\Leftrightarrow p<p_5\ ,
%2(n+\rho(v_\infty)-1)(p_3-p)<\ga(p, n+d_\infty)
$$ where $\ga_2$ is given in \eqref{eq-gamma_2}.
Thus we have blow up result and lifespan estimate,
the
fourth lifespan estimate in \eqref{lifespan1},
 if $\max\{p_1,p_2\}<p<\min(p_3, p_5)$. %where $p_5$ is given in

Finally, we remark that
%\marginpar{\color{red}to put in main text}{\color{blue}
%In addition,
the last upper bound of the lifespan in \eqref{lifespan1} was obtained
 for $p\in(p_4, p_G(n+d_\infty))\neq \emptyset$. However, after comparison with the corresponding estimates in the first and fourth case, we see that it gives better upper bound only for
$p\in (p_4, p_0]\cap (p_4, p_G(n+d_\infty))$,
 if we have
$p_4<\min (p_0, p_G(n+d_\infty))$. This is the reason we state it
for this restricted range in \eqref{lifespan1}.

\section{Proof of Theorem  \ref{regular} }\label{sec:regular}

In this section, we prove Theorem \ref{regular}.

%\subsection{Proof of Theorem \ref{regular}}

 According to Lemma \ref{lem8} and \ref{lem2} we have,
\[
\phi_0\simeq
\begin{cases}
\ln(r+2)\ , n=2\\
1\ , \ n\geq 3\ ,
\end{cases}
\
 \mbox{and}
\ \
\phi_1\simeq
\langle r\rangle^{-\frac{n-1-d_\infty}{2}}e^{ r}\ .
\]
Thus, when $n\ge 3$, the situation considered in Theorem \ref{regular} could be viewed as a particular case of $v_0=d_0=v_\infty=0$  and $\theta=2$, in Theorem \ref{general}. Then $p_1=p_2=1$, $p_3=\f{n}{n-1}$, and so Theorem \ref{regular} for $n\ge 3$ could be obtained as the Corollary of Theorem \ref{general}.
In the following, it suffices for us to  present the proof for $n=2$, for which we still have
\eqref{step1},
\eqref{middle},
 \eqref{small_p} and
 \eqref{large_p}, with only replacements from $\phi_0\sim 1$ to $\phi_0\sim
 \ln(r+2)$.

Concerning the right hand side of \eqref{step1},
 we have
 \begin{eqnarray}\label{F0-regu}
\vep\ \les F_0&\les &T^{-p'}\int_{T/2}^T\int_0^{t+R}(T^{-1}+\<r
\>^{-1})^{p'}r\ln (r+2)  drdt \nonumber\\
&\les&T^{-2p'}T^{3}\ln T +T^{-p'}\int_{T/2}^T\int_0^{t+R}\<r
\>^{1-p'} \ln (r+2)  drdt \nonumber
\\
&\les&
\left\{\begin{array}{ll}
T^{1-p'} & p<2\\
T^{1-p'}(\ln T)^2 & p=2\\
T^{3-2p'}\ln T & p>2\ .
\end{array}
\right.
\end{eqnarray}
Based on this inequality, we could extract the first three lifespan estimates in \eqref{lifespan1a} for $p\in (1,3)$ and $n=2$, which have certain log loss for the case $p\ge n/(n-1)=2$.

Concerning $F_1$ in
\eqref{large_p},   by  Lemma \ref{keyinq}, we have
\begin{eqnarray*}
F_1&=&\int_{\frac T2}^T\int_{|x|\leq t+R}(2+|D|^{p'})\phi_0^{-\f{p'}{p}}\Psi_0^{p'}dxdt \\
&\les&\int_{\frac T2}^T\int_0^{t+R}(1+r)^{-p'\f{ 1-d_\infty }{2}+1}
(\ln (r+2))^{-\f{p'}{p}}
 e^{(r-t)p'}drdt\\
 &\les& T^{2-p'\f{ 1-d_\infty }{2}}
(\ln T)^{-\f{p'}{p}}\ .
\end{eqnarray*}
Plugging these estimates in \eqref{large_p}, we derive that
\beeq\label{F0F1}
\vep^p T^p  \les  F_0 F_1^{p-1}\les
T^{2(p-1)-p\f{ 1-d_\infty }{2}}\times
\left\{\begin{array}{ll}
T^{1-p'}(\ln T)^{-1} & p<2\\
T^{1-p'} \ln T & p=2\\
T^{3-2p'} & p>2\ .
\end{array}
\right.
\ . \eneq
Similarly, based on this inequality, we could extract the second set (fourth to sixth) of lifespan estimates in \eqref{lifespan1a} for $p\in (1,p_S(n+d_\infty))$ and $n=2$, which have certain log adjustment for the case $p< n/(n-1)=2$.

\section{Proof of Theorem \ref{thmStrauss}}\label{sec:thmStrauss}

\subsection{Subcritical case}
In this subsection, we present the proof of Theorem \ref{thmStrauss} for $p<p_S(n)$, under the assumption that
$V=0$ and $D(x)\in C(\R^n)\cap C^{\de}(B_\de)$, which is of short range, in the sense that 
$D\in L^n\cap L^\infty(\R^n)$.
%$D(r)\in L^1([0,\infty), dr)$.
The proof follows the same lines as that of Theorem \ref{general} or Theorem  \ref{regular}. 
%Actually, when $n\ge 3$ and $p\neq n/(n-1)$, it %is covered by  can be viewed as a corollary of Theorem  \ref{regular}.

At first, with $\phi_0=1$, by \eqref{step1}, we have
\beeq
\label{step1'}
C_1(f, g)\vep+\int_0^T\int_{\R^n}|u|^p\eta_T^{2p'} dxdt\le C
\int_{T/2}^T\int_{B_{t+R}}
 (T^{-2}+
|D| T^{-1})^{p'}
dxdt=F_0 \ .
\eneq
When $p'\ge n$, i.e., $p\le \frac{n}{n-1}$, we know that
$D\in L^n\cap L^\infty(\R^n)\subset L^{p'}$,
and so
$$F_0= C
\int_{T/2}^T\int_{B_{t+R}}
 (T^{-2}+
|D| T^{-1})^{p'}
dxdt \les 
T^{n+1-2p'}+T^{1-p'}\les T^{1-p'}.
$$
Otherwise, for
$p'< n$, i.e., $p> \frac{n}{n-1}$, by using H\"older's inequality, we obtain
$$\int_{B_{T+R}}
|D|^{p'}
dx\les
\||D|^{p'}\|_{L^{n/p'}}\|1\|_{L^{n/(n-p')}(B_{T+R})}\les T^{n-p'}\ ,$$
and thus
$$F_0\les 
T^{n+1-2p'}+T^{1-p'}
\int_{B_{T+R}}
|D|^{p'}
dx
\les T^{n+1-2p'}\ .
$$
In conclusion, we have
\begin{equation}\label{T12step2}
\int_0^T\int_{\R^n}|u|^p\eta_T^{2p'}dxdt\lesssim\left\{
\begin{array}{ll}
T^{1-p'}\ &\mbox{if}\ 1<p\le \frac{n}{n-1},\\
T^{n+1-2p'}\ &\mbox{if}\ p\ge\frac{n}{n-1}.\\
\end{array}
\right.
\end{equation}

As $D\in L^\infty(\R^n)$, there exists $\la_0>0$ such that $|D|\le \la_0$ and then we choose $\phi_{\la_0}$ which is ensured by
Lemma \ref{lem2}. Let
$\Psi(t, x)=\eta_T^{2p'}\Phi (t, x)=\eta_T^{2p'}e^{-\la_0 t}\phi_{\la_0}(x)$ be the test function, 
as for \eqref{middle} and \eqref{large_p}, we get from \eqref{eq-key} and Lemma \ref{keyinq}  that
\begin{equation}\label{T12step3}
\begin{aligned}
&\int_0^T\int_{\R^n}|u|^p\Psi  dxdt + C_2(f,g) \vep\\
\les &
\int_{T/2}^T\int_{B_{t+R}}|u| \eta_T^{2(p'-1)}
\frac {2+|D|} T\Phi dxdt\\
\les & T^{-1}
\left(\int_{T/2}^T\int_{\R^n}\eta_T^{2p'}|u|^p dxdt\right)^{\frac1p}
\left(\int_{T/2}^T\int_{B_{t+R}} \Phi^{p'} dxdt\right)^{\frac1{p'}}
\\
\le&CT^{-1+(n-\frac{n-1}{2}p')\frac{1}{p'}}\left(\int_{T/2}^T\int_{\R^n}\eta_T^{2p'}|u|^p dxdt\right)^{\frac1p},\\
\end{aligned}
\end{equation}
which yields
\begin{equation}\label{T12step4}
\begin{aligned}
\vep^pT^{n-\frac{n-1}{2}p}=\vep^pT^{p-(n-\frac{n-1}{2}p')(p-1)}
\lesssim \int_{T/2}^T\int_{\R^n}\eta_T^{2p'}|u|^p dxdt \ .
\end{aligned}
\end{equation}
Based on \eqref{T12step2} and \eqref{T12step4}, we obtain the first and second lifespan estimates in \eqref{lifespan1b} in Theorem \ref{thmStrauss}. 

\subsection{Critical case}
Turning to the critical case, $p=p_S(n)$,
the proof  is parallel to that in \cite{LT20}, which heavily relies on Lemma \ref{elp}. 
%equipped with the Lemma \ref{elp}, the same argument as that in \cite{LT20} will give the proof. 
For completeness, we present a proof here.

Based on the family of test functions $\phi_{\lambda}$, with $\la\in (0,1]$, satisfying
\begin{equation}\label{elpphila}
\begin{aligned}
\Delta \phi_\lambda-\la D(x)\phi_\la=\lambda^2\phi_\la,~~~x\in \R^n\ ,
\end{aligned}
\end{equation}
we construct a new class of test functions, with parameters $q>0$,
\[
b_q(t, x)=\int_0^1e^{-\lambda t}\phi_{\lambda}(x)\lambda^{q-1}d\lambda\ .
\]
The magic of the test functions $b_q(t, x)$ lie on the facts that they satisfy
\begin{equation}\label{bq1}
\partial^2_tb_q-\Delta b_q-D(x)\partial_tb_q=0,~~~ \partial_tb_q=-b_{q+1}\ ,
\end{equation}
and enjoy the asymptotic behavior for $n\ge 2$ and $r\le t+R$
\begin{equation}\label{bq2a}
b_q(t, x)\gtrsim(t+R)^{-q}, \ q>0\ ,
\end{equation}
and
\begin{equation}\label{bq2}
b_q(t, x)\lesssim\left\{
\begin{array}{ll}
(t+R)^{-q}\ &\mbox{if}\ 0<q<\frac{n-1}2,\\
(t+R)^{-\frac{n-1}2}(t+R+1-|x|)^{\frac{n-1}{2}-q}\ &\mbox{if}\ q>\frac{n-1}2.\\
\end{array}
\right.
\end{equation}
Based on \eqref{bq1}-\eqref{bq2}, the same argument in
\cite{LT20} will yield a proof for the last lifespan of Theorem \ref{thmStrauss}.

\subsubsection{Estimates of the test functions:  \eqref{bq2a} and  \eqref{bq2} }
In \cite{LT20}, the asymptotic behavior \eqref{bq2} was proved by employing the property of the hypergeometric function, when
 $\be>2$ and $n\ge 3$. In the following we will use a relatively simpler way to show it, in the general case
  $\be>1$ and $n\ge 2$,
  inspired by the method in \cite{LLWW}.

We first consider the lower bound \eqref{bq2a}. From the definition of $b_q$ we know
\begin{equation}\label{bq3}
\begin{aligned}
b_q(t, x)&\gtrsim\int_{\frac{1}{2(t+R)}}^{\frac{1}{t+R}}e^{-\lambda t}\phi_{\lambda}(x)\lambda^{q-1}d\lambda\\
&\gtrsim\int_{\frac{1}{2(t+R)}}^{\frac{1}{t+R}}e^{-\lambda (t+R)}\lambda^{q-1}d\lambda\\
&\gtrsim(t+R)^{-q}\int_{\frac12}^{1}e^{-\theta}\theta^{q-1}d\theta\\
&\gtrsim(t+R)^{-q},\\
\end{aligned}
\end{equation}
where we used the fact $\phi_{\lambda}\thicksim 1$ when $r\lambda\le\frac{r}{t+R}\le 1$ by \eqref{510}.

For the upper bound \eqref{bq2}, we divide the proof into two parts: $r\le \frac{t+R}{2}$ and $\frac{t+1}{2}\le r\le t+R$. For the former case, we have
\begin{equation}\label{bq4}
\begin{aligned}
b_q(t, x)&\lesssim\int_{0}^{1}e^{-\frac{\lambda(t+R)}{2}}(1+\lambda r)^{-\frac{n-1}{2}}\lambda^{q-1}d\lambda\\
&\lesssim\int_{0}^{1}e^{-\frac{\lambda(t+R)}{2}}\lambda^{q-1}d\lambda\\
&\lesssim(t+R)^{-q}\int_{0}^{\infty}e^{-\theta}\theta^{q-1}d\theta\\
&\lesssim(t+R)^{-q}.\\
\end{aligned}
\end{equation}
If $\frac{t+R}{2}\le r\le t+R$ and $0<q<\frac{n-1}{2}$, it is clear that
\begin{equation}\label{bq5}
\begin{aligned}
b_q(t, x)&\lesssim\int_{0}^{1}(1+\lambda(t+R))^{-\frac{n-1}{2}}\lambda^{q-1}d\lambda\\
&\lesssim(t+R)^{-q}\int_{0}^{\infty}(1+\theta)^{-\frac{n-1}{2}}\theta^{q-1}d\theta\\
&\lesssim(t+R)^{-q}.
\end{aligned}
\end{equation}
For the remaining case: $\frac{t+R}{2}\le r\le t+R$ and $q>\frac{n-1}{2}$, we see that
\begin{equation}\label{bq6}
\begin{aligned}
b_q(t, x)&\lesssim\int_{0}^{1}e^{-\lambda(t+R+1-r)}(\lambda(t+R))^{-\frac{n-1}{2}}
\lambda^{q-1}d\lambda\\
&\lesssim(t+R)^{-\frac{n-1}{2}}\int_{0}^{1}e^{-\lambda(t+R+1-r)}
\lambda^{q-1-\frac{n-1}{2}}d\lambda\\
&\lesssim(t+R)^{-\frac{n-1}{2}}(t+R+1-r)^{\frac{n-1}{2}-q}\int_0^{\infty}e^{-\theta}
\theta^{q-1-\frac{n-1}{2}}d\theta\\
&\lesssim(t+R)^{-\frac{n-1}{2}}(t+R+1-r)^{\frac{n-1}{2}-q}\ .
\end{aligned}
\end{equation}

\subsubsection{Proof}
With $b_q$ and its asymptotic behavior in hand,
we use $\Psi_T=\eta_T^{2p'}b_q$ as the test function, which gives us
$$
\begin{aligned} & \int_0^T\int_{\R^n}\eta_T^{2p'}b_q|u|^pdxdt\\
=&\int_0^T\int_{\R^n}\left(\partial_t^2u-\Delta u+D(x) \partial_tu\right)b_q\eta_{T}^{2p'}dxdt\\
\lesssim& \int_0^T\int_{\R^n}|u|\left(|2\partial_tb_q\partial_t\eta_T^{2p'}|
+|b_q\partial_t^2\eta_T^{2p'}|
+\Big| D(x) b_q\partial_t\eta_T^{2p'}\Big|\right)dxdt\\
\le & (\int_{\frac T 2}^T\int_{\R^n}|u|^p\Psi_T dxdt)^{\frac 1 p}
\left(\int_{\frac T 2}^T\int_{B_{t+R}}
b_q^{-\frac {p'}p}\left[(T^{-1}
+D )b_q
+ b_{q+1} \right]^{p'}T^{-p'} 
dxdt\right)^{\frac 1 {p'}}\ .
%\\\triangleq& I+II+III,
\end{aligned}
$$
Let
$q=\frac{n-1}{2}-\frac1p$.
As $p>(n+1)/(n-1)$, $p'<(n+1)/2\le n$, $D\le \mu (1+r)^{-\be}$, we have
\begin{eqnarray*}
\int_{\frac T 2}^T\int_{B_{t+R}}
(T^{-1}
+D)^{p'}T^{-p'} b_q dxdt
 & \les & \int_{\frac T 2}^T\int_{B_{t+R}}
 (t+R)^{\frac 1p-\frac{n-1}2}
  T^{-p'}(1+r)^{-p'}dxdt
 \\
   & \les &  T^{\frac 1p-\frac{n-1}2-2p'+n+1}=1
\end{eqnarray*}
where we used \eqref{bq2}  and the fact that
\[
\frac 1p-\frac{n-1}2-2p'+n+1=
\frac 1p+\frac{n-1}2-\frac{2}{p-1}=0
\]
for $p=p_S(n)$. 
By \eqref{bq2a}-\eqref{bq2}, we see that
$$b_{q}\simeq (t+R)^{-q}, b_{q+1}\les 
(t+R)^{-\frac{n-1}2}(t+R+1-|x|)^{\frac{n-1}2-q-1}\simeq
(t+R)^{-\frac{n-1}2}(t+R+1-|x|)^{-\frac{1}{p'}}\ ,$$
and so
\begin{eqnarray*}
 &  &\int_{\frac T  2}^T\int_{B_{t+R}}
 b_{q+1}^{p'}b_q^{-\frac{p'}p}T^{-p'}dxdt\\
 & \les & \int_{\frac T2}^T\int_{B_{t+R}}
 (t+R)^{-\frac{n-1}2-\frac{1}{p(p-1)}}
 (t+R+1-|x|)^{-1}
 (t+R)^{-(\frac{n-1}2-q)\frac{1}{p-1}}
  T^{-p'}dxdt\\
   & \les & \int_{\frac T 2}^T\int_0^{{t+R}}
 T^{\frac{n-1}2-\frac{1}{p(p-1)}-p'}
 (t+R+1-r)^{-1}
drdt\\
  & \les & \int_{\frac T 2}^T\int_0^{{t+R}}
 T^{-1}
 (t+R+1-r)^{-1}
drdt\les \ln T\ .
\end{eqnarray*}
In conclusion, we have
\beeq \label{eq-0217}\int_0^T\int_{\R^n}\eta_T^{2p'}b_q|u|^pdxdt \les (\ln T)^{1/p'} Z(T)^{1/p}
\ ,\eneq
where
$$Z(T)\triangleq\int_{T/2}^T\int_{\R^n}|u|^pb_q \eta_T^{2p'} dxdt\le \int_0^T\int_{\R^n}|u|^pb_q \eta_T^{2p'} dxdt\triangleq X(T)\ .$$

To relate $Z$ and $X$, and recall the critical nature of the situation, let
$Z=TY'$ and $Y(2)=0$, then
%For $M\in [2, T]\subset [2, T(\ep))$, we set
\[
Y(M)=\int_2^M Z(T) T^{-1}dT ,
\]
and
\begin{equation*}
\begin{aligned}
Y(M)
=&\int_2^M
\left(\int_{T/2}^T\int_{\R^n}b_q|u|^p(\eta_T(t))^{2p'}dx dt\right) T^{-1}dT\\
\le &\int_{1}^M\int_{\R^n}b_q|u|^p\int_{t}^{\min(M,2t)}(\eta_T(t))^{2p'}T^{-1}dT dx dt\\
=&\int_{1}^M\int_{\R^n}b_q|u|^p\int^{1}_{\max(t/M, 1/2)}(\eta(s))^{2p'}s^{-1}ds dx dt\\
\le &\int_{1}^M\int_{\R^n}b_q|u|^p (\eta(t/M))^{2p'}\int^{1}_{1/2}s^{-1}ds dx dt\\
\le &\ln 2 \int_{1}^M\int_{\R^n}b_q|u|^p (\eta(t/M))^{2p'}  dx dt\le
\ln 2\int_0^M\int_{\R^n}|u|^pb_q \eta_T dxdt\les  X(M)\ ,
\end{aligned}
\end{equation*}
where we used the assumption that $\eta$ is decreasing.
Thus, recalling \eqref{eq-0217}, we have
\beeq \label{Yp} Y(T) \les (\ln T)^{1/p'} (TY'(T))^{1/p},\ 
TY'(T)\ge c Y ^p (\ln T)^{1-p}, \forall T\in [2, T_\vep )\ .
\eneq

In addition, by \eqref{T12step4} and \eqref{bq2a}, we have
\begin{equation}%\label{thmcristep1}
\label{Ycrideriv}
TY'=Z(T)\triangleq\int_{T/2}^T\int_{\R^n}|u|^pb_q \eta_T ^{2p'} dxdt 
\gtrsim \e^p T^{n-\frac{n-1}{2}p-q}=\e^p.
\end{equation}
where we used the fact
\[
n-\frac{n-1}{2}p=q=\frac{n-1}{2}-\frac1p
\]
for $p=p_S(n)$.

Equipped with  \eqref{Ycrideriv} and \eqref{Yp}, we could apply
Lemma 3.10 in \cite{ISWa} to conclude the last lifespan in \eqref{lifespan1b}.
Actually,  by \eqref{Ycrideriv},
integration from $2$ to $T>4$ yields 
\beeq\label{0217-1}Y(T)\ge Y(2)+c\e^p (\ln T-\ln 2)\gtrsim \e^p \ln T,\ \forall T\in (4, T_\e)\ .\eneq
Similarly,  for \eqref{Yp}, integration from $T_1$ to $T_2>T_1$ gives us
%\begin{eqnarray*}Y(T_2)^{1-p}&\le  & Y(T_1)^{1-p}-c(p-1) \int_{\ln T_1}^{\ln T_2} \tau^{1-p}d\tau \\& \le &  Y(T_1)^{1-p}-c(p-1)  (\ln T_2)^{1-p}\ln (T_2/T_1)\end{eqnarray*}
$$Y(T_2)^{1-p}\le  Y(T_1)^{1-p}-c(p-1) \int_{\ln T_1}^{\ln T_2} \tau^{1-p}d\tau 
\ , \forall 2<T_1< T_2<T_\e \ .$$
As $Y(T)\ge 0$, letting $T_2\to T_\e$, and using \eqref{0217-1}, we see that
$$ \int_{\ln T_1}^{\ln T_\vep} \tau^{1-p}d\tau \les Y(T_1)^{1-p}\les 
\e^{-p(p-1)} (\ln T_1)^{1-p}, \forall 4<T_1< T_\e\ .$$
Setting $T_1=\sqrt T_\e$, it follows that
$$  \ln T_\vep \les 
\e^{-p(p-1)}\ ,$$
which gives us the desired lifespan for the critical case, in \eqref{lifespan1b}.

\begin{center}
ACKNOWLEDGMENTS
\end{center}
The first author is supported by NSF of Zhejiang Province(LY18A010008) and NSFC 11771194.
The second and fourth author were supported in part by NSFC 11971428.

\vskip10pt

%\clearpage
%\end{CJK*}

\end{document}